\tikzset{math nodes/.style={execute at begin node=$,execute at end node=$}}
\tikzset{display math nodes/.style={execute at begin node=$\displaystyle, execute at end node=$}}
\newcommand{\calO}{\mathcal{O}}
\newcommand{\calP}{\mathcal{P}}
\newcommand{\calQ}{\mathcal{Q}}
\newcommand{\aut}{\mathrm{aut}}
\newcommand{\dom}{\mathrm{dom}\,}
\newcommand{\col}{\mathrm{col}}
\newcommand{\rme}{\mathrm{e}}
\newcommand{\rmE}{\mathrm{E}}
\newcommand{\edge}{\rme}
\newcommand{\Edge}{\rmE}
\newcommand{\Exedge}{\mathrm{Ex}}
\newcommand{\rmf}{\mathrm{f}}
\newcommand{\f}{\rmf}
\newcommand{\fin}{\mathrm{Fin}}
\newcommand{\leaf}{\mathrm{leaf}}
\newcommand{\rt}{\mathrm{rt}}
\newcommand{\unit}{\mathrm{un}}
\newcommand{\rmun}{\mathrm{un}}
\NewDocumentCommand{\Op}{gg}
  {%
  \IfNoValueTF{#2}%
    {\IfNoValueTF{#1}%
      {\mathrm{Op}}%
      {\mathrm{Op}_{#1}}}%
    {\mathrm{Op}_{#1}^{#2}}%
  }%
\newcommand{\map}{\mathrm{map}}
\newcommand{\h}{\mathrm{h}}
\newcommand{\e}{\mathsf{f}}
\newcommand{\id}{\mathsf{id}}
\newcommand{\topl}{\mathsf{Top}}
\newcommand{\Coll}{\mathsf{Coll}}
\newcommand{\Collleqone}{{\Coll^{\leq 1}}}
\newcommand{\reals}{\mathbb{R}}
\newcommand{\Operads}{\Op{}{}}
\newcommand{\inv}{^{-1}}
\newcommand{\abs}[1]{\left| #1 \right|}
\newcommand{\modsim}{/\!\!\sim\,}
\newcommand{\smallbullet}{{\scriptstyle\bullet}}
\newcommand{\W}{W}
\newcommand{\swcheese}{{SC}_{\!d}}
\newcommand{\swcheesebullet}{{SC}_{\!d,\smallbullet}}
\newcommand{\wswcheese}{\mathsf{SC}_{\mathsf{d}}}
\newcommand{\schone}{\mathsf{SC}_{\mathsf d}^{\h 1}}
\newcommand{\sch}{{\wswcheese^{\h}}}
\newcommand{\schonebullet}{\mathsf{SC}_{\mathsf d,\smallbullet}^{\h 1}}
\newcommand{\schbullet}{{\mathsf{SC}_{\mathsf d, \smallbullet}^{\h}}}
\newcommand{\swisscheese}{\mathsf{SC}}
\newcommand{\schinf}{{\mathsf{SC}_{\mathsf{d}}^{\h\infty}}}
\newcommand{\rfor}{\mathsf{For}}
\newcommand{\for}{\mathsf{For}}
\newcommand{\trees}{\mathsf{Trees}}
\newcommand{\inverts}{{\mbox{\i}\mathrm{n}}}
\newcommand{\outverts}{\mathrm{out}}
\newcommand{\edges}{\textsl{Edge}}
\tikzset{map name/.style={font=\scriptsize}}
\tikzset{numbered picture/.style={baseline=(current bounding box.center)}}
\DeclareMathOperator*{\colim}{colim}
\DeclareMathOperator*{\hocolim}{hocolim}
\theoremstyle{definition}
 \newtheorem*{lemma*}{Lemma}                                        
 \newtheorem*{theorem*}{Theorem}
\newtheorem{theorem}[equation]{Theorem}
\newtheorem{corollary}[equation]{Corollary}
\newtheorem{lemma}[equation]{Lemma}                                
\newtheorem{proposition}[equation]{Proposition}
\newtheorem{remark}[equation]{Remark}                   
\newtheorem{definition}[equation]{Definition}
\title{Forests and the W construction}
\author{Justin D. Thomas}
\begin{document}

\maketitle

\begin{abstract}
	A modified definition of the category of forests found in Costello \cite{costello-a-infinity} and Getzler \cite{getzler-operads-revisited} is given.  This language is used to prove some cofibration properties of the $W$ construction for colored operads.  An application is given to the swiss cheese operad.  The language of forests is convenient for showing that the swiss cheese operad is equivalent to its free degree 0 and 1 pieces.
\end{abstract}

\section{The category of Forests}
\label{section:forests}
\begin{definition}
  \label{definition:K-colored-sets}
  Let $\fin$ be the symmetric monoidal category of finite sets, where the monoidal structure is disjoint union.  Let $K$ be any set, the over category $\fin_{/K}$ has objects given by pairs $( I, \col_I)$ where $I$ is a finite set and $\col_I \colon I \to K$ is any function. A map $f \colon (I,\col_I) \to (J,\col_J)$ is a map of sets $f \colon I \to J$ such that $\col_J f = \col_I$.  We call $K$ the \emph{set of colors}, and we call $\col_I$ the \emph{coloring} of $I$.  We usually leave the coloring implicit and simply refer to $I$ as a \emph{$K$-colored set}.  A map of $K$-colored sets is called a \emph{color-preserving} map or \emph{colored} map.  If we do not require $f$ to preserve the coloring, we call it an \emph{uncolored} map.  Disjoint union of sets over $K$ defines on $\fin_{/K}$ the structure of a symmetric monoidal category.
\end{definition}

The following definition is an amalgamation of those found in \cite{kontsevich-soibelman-deformations}, \cite{costello-a-infinity} and \cite{getzler-operads-revisited}.

\begin{definition}\label{definition:category-of-forests}
A \emph{$K$-colored young forest} is an uncolored map of finite $K$-colored  sets $x \colon I_x \to J_x$.  A \emph{$K$-colored forest} $f \colon x \to y$ is a color-preserving isomorphism $f\colon I_y\sqcup J_x \to J_y \sqcup I_x$ such that $y$ is realized by a colimit of $f$ and $x$ as in diagram~\ref{diagram:forest-colimit}.
\begin{equation}\label{diagram:forest-colimit}
  \begin{tikzpicture}[numbered picture, every node/.style={inner sep=2pt},every left delimiter/.style={xshift=1.5ex}, every right delimiter/.style={xshift=-1ex}]
      \matrix (m) [matrix of math nodes, row sep=.7ex, column sep=2em, text height=1.5ex, text depth=0.05ex, left delimiter=\lbrack, right delimiter=\rbrack]
      {
        & I_x
      \\
         J_y \sqcup I_x &
      \\
        & J_x
      \\};
      \foreach \source/\target in {1-2/2-1, 3-2/2-1, 1-2/3-2}
      {\path[->] (m-\source) edge (m-\target);}
      \foreach \source/\target/\pos/\maplabel in {3-2/2-1/below/f, 1-2/3-2/right/x}
      {\path (m-\source) -- node[map name, \pos] (\source/\target) {$\maplabel$} (m-\target);}
      \path (m.west) --++ (-12pt,1pt) node (colim) {$\colim$};
      \path (m-1-2) --++ (-4cm,0) node (Iy) {$I_y$};
      \path (m-3-2) --+ (-4cm, 0) node (Jy) {$J_y$};
      \path[->] (Iy) edge node[above, map name] {$f$} (colim) edge node[left,map name] {$y$} (Jy) (Jy) edge node[above, map name] {$\sim$} (colim);
  \end{tikzpicture}
\end{equation}
More specifically we require the map from $J_y$ to be an isomorphism and the corresponding composite with the inverse to be $y \colon I_y \to J_y$.  Now if $g \colon y \to z$ is another forest, then $gf \colon I_z \sqcup J_x \to J_z \sqcup I_x$ is defined by diagram~\ref{diagram:forest-composition}
\begin{equation}\label{diagram:forest-composition}
  \begin{tikzpicture}[numbered picture, every node/.style={inner sep=2pt}]
      \matrix (m) [matrix of math nodes, row sep=.7ex, column sep=2em, text height=1ex, text depth=0.05ex]
      {
        I_z &               & I_y &               & I_x
      \\
            & J_z\sqcup I_y &     & J_y\sqcup I_x &
      \\
        J_z &               & J_y &               & J_x
      \\};
      \foreach \source/\target in {1-1/2-2, 3-1/2-2, 1-3/2-2, 3-3/2-2, 1-3/2-4, 3-3/2-4, 1-5/2-4, 3-5/2-4}
      {\path[->] (m-\source) edge (m-\target);}
      \foreach \source/\target/\pos/\maplabel in {1-1/2-2/above/g, 3-3/2-2/above/g, 1-3/2-4/above/f, 3-5/2-4/above/f}
      {\path (m-\source) -- node[map name, \pos] (\source/\target) {$\maplabel$} (m-\target);}
  \end{tikzpicture}
\end{equation}
By this we mean take the colimit of the square in the diagram.  The inclusions from $J_z$ and $I_x$ will induce an isomorphism with this colimit.  Indeed, if $i \in I_y$ and $f(i) \in J_y$, then diagram~\ref{diagram:forest-colimit} implies that $f(i) = y(i)$.  Moreover, the condition on $g$ and $y$ implies that a sufficiently large power of the endomorphism $(\id_{J_z}, gy)$ of $J_z \sqcup I_y$ has image contained in $J_z$.  The condition on $gf$ can be checked by inserting the map $x$ into the right hand side of diagram~\ref{diagram:forest-composition}, then taking colimits.  We denote the natural maps given by the colimit in \ref{diagram:forest-colimit} as $[f|x]\colon I_y \sqcup J_y \sqcup I_x \sqcup J_x\to J_y$.



  This composition law defines a category called $\rfor_K$ whose objects are young forests and whose morphisms are forests.  Under disjoint union of sets $\rfor_K$ becomes a symmetric monoidal category.

  Given a $K$-colored forest $f \colon x \to y$ we call $V(f)\coloneqq J_x$ the set of \emph{internal vertices} of $f$.  We call $\inverts(f)\coloneqq I_y$ the set of \emph{input vertices} of $f$ and $\rt(f)\coloneqq J_y$ the set of \emph{root vertices} of $f$.

  Given a forest $f \colon x \to y$ we define $\Exedge(f)\coloneqq J_y \sqcup I_x$ to be the set of \emph{extended edges} of $f$.  The isomorphism $f \colon I_y \sqcup J_x \to \Exedge(f)$ gives a decomposition
  \[
  	\Exedge(f) = (I_y \times_f J_y) \sqcup (I_y \times_f I_x) \sqcup (J_x \times_f J_y) \sqcup (J_x \times_f I_x).
  \]
  We call $\unit(f)\coloneqq I_y \times_f J_y$ the set of \emph{trivial} or \emph{unit} edges; $\leaf(f) \coloneqq I_y \times_f I_x$ the set of \emph{leaf} edges or the set of \emph{leaves}; $\rt(f)\coloneqq J_x \times_f J_y$ the set of \emph{root} or \emph{output} edges; and $\Edge(f) \coloneqq J_x \times_f I_x$ the set of \emph{internal edges} or simply \emph{edges}.  The internal edges have the most compact notation because they will be referred to the most often.  The inclusion of edges into extended edges will be denoted $\edge(f)\colon \Edge(f) \hookrightarrow \Exedge(f)$.  The extended edges can be remembered by taking four pullbacks in the standard diagram representation of $f$ as in diagram~\ref{diagram:extended-edges}.
	\begin{equation}\label{diagram:extended-edges}
    \begin{tikzpicture}[->,numbered picture, node distance=4mm and 15mm, every node/.style={on grid,inner sep=0.5pt}, lbl/.style={above=1pt,font=\scriptsize}]
        \node (JyIx) {$\Exedge(f)$};
        \node[above left =of JyIx] (Iy) {$I_y$} edge[->,pos=0.75] node[lbl] {$f$} (JyIx);
        \node[above right=of JyIx] (Ix) {$I_x$} edge[->] (JyIx);
        \node[below right=of JyIx] (Jx) {$J_x$} edge[->] node[lbl] {$f$} (JyIx);
        \node[below left =of JyIx] (Jy) {$J_y$} edge[->] (JyIx);
        \node[above right=of Iy]   {$\leaf(f)$} edge (Iy) edge (Ix);
        \node[below left =of Iy]   {$\unit(f)$} edge (Iy) edge (Jy);
        \node[below left =of Jx]   {$\rt(f)$}   edge (Jy) edge (Jx);
        \node[above right=of Jx]   {$\Edge(f)$}     edge (Ix) edge (Jx);
    \end{tikzpicture}
	\end{equation}
	
\end{definition}

\begin{remark}
  \label{remark:drop-K-from-notation}
Throughout this paper $K$ denotes a finite set of colors.  We often drop it from the notation.  Forests and young forests are always $K$-colored and $\for_K$ will be abbreviated $\for$.
\end{remark}

%
%
%

\section{The \texorpdfstring{$W$}{W} construction}
\label{section:w-construction}
Let $(\topl,\times)$ be the symmetric monoidal category of compactly generated topological spaces with the Cartesian product.  We show how the $\W$ construction of Boardman and Vogt \cite{boardman-vogt-hiasots} can be realized as a coend construction using $\for$.  In nice situations the $W$ construction gives a cofibrant replacement for an operad, as shown in \cite{berger-moerdijk-w}.  We use $[0,\infty]$ as our edge labels as in \cite{kontsevich-operads-motives}.

Given any young forest $z$ there is a contravariant functor $W \colon \rfor_{/z}\to \topl$ from the over category of $z$ to topological spaces.  For any object $g \colon y \to z$ of this over category, set $W(g) = \map(E(g), [0,\infty])$.  If $f \colon x \to y$ is a forest, there is a co-correspondence
\[
	E(g) \hookrightarrow I_y \xrightarrow{[g|f]} J_z \sqcup I_x \hookleftarrow I_x \hookleftarrow E(gf),
\]
where the map $[g|f]\colon I_z \sqcup J_z \sqcup I_y \sqcup J_y \sqcup I_x \sqcup J_x \to J_z \sqcup I_x$ is given by including into the square in diagram~\ref{diagram:forest-composition} then passing to the colimit.  By pushing forward and pulling back functions, we get a map $W_\Sigma(f) \colon W(g) \to W(gf)$.  This process uses the sum operation on $[0,\infty]$.  This is an extension of $+$ on $[0,\infty)$ such that $t + \infty = \infty = \infty + t$ for all values of $t$.  Concretely, any morphism $gf \to g$ in the over category collapses some internal edges and deletes some unary vertices.  Any collapsed edge in $gf$ is labeled with 0 in $W(gf)$.  Any vertex of $gf$ which is deleted in $g$ causes two edges of $gf$ to have the same image under $[g|f]$.  This merged edges in $gf$ is labeled with the sum of the two edges in its pre image under $[g|f]$.

If $h \colon z \to w$ is a forest there is a morphism $W_\infty(h) \colon W(g) \to W(hg)$ which uses the maps $E(g) \hookrightarrow I_y \hookleftarrow E(hg)$.  In this case we do not push forward and pull back functions.  Rather we extend a function $t \colon E(g) \to [0,\infty]$ to a function on $E(hg)$ by setting $t(\epsilon) = \infty$ if $\epsilon \not\in E(g)$.  This defines a natural transformation $W_\infty(h) \colon W \to W h_*$ where $h_* \colon \rfor^{op}_{/z} \to \rfor^{op}_{/w}$ is induced by $h \colon z \to w$.  In other words, diagram~\ref{diagram:Wg-Wgf-Whgf} commutes for all composeable triples $h, g$, and $f$.
\begin{equation}\label{diagram:Wg-Wgf-Whgf}
       \begin{tikzpicture}[numbered picture]
          \matrix (m) [matrix of math nodes, row sep={2.5ex}, column sep={2.5em},
             text height=1.5ex, text depth=0.25ex]
          {
           W(g) & W(gf)
          \\
           W(hg) & W(hgf)
          \\};
          \path[->, every node/.style={map name}]
           (m-1-1) edge node[above] {$W_\Sigma (f)$} (m-1-2) edge node[left] {$W_\infty(h)$} (m-2-1)
           (m-1-2) edge node[right] {$W_\infty (h)$} (m-2-2)
           (m-2-1) edge node[above] {$W_\Sigma (f)$} (m-2-2);
       \end{tikzpicture}
    \end{equation}

\begin{definition}
  \label{definition:an-operad-is-a-functor}
  A $K$-colored operad $\calO$ is a symmetric monoidal functor $(\rfor, \sqcup) \to (\topl,\times)$.
\end{definition}

Consider an operad $\calO$ as a collection of functors $\calO_z \colon \rfor_{/z} \to \topl$ by setting $\calO_z(g) \coloneqq \calO(y)$ for $g\colon y \to z$.  For a young forest $z$ the topological space $W\calO(z)$ is the coend
\begin{equation}\label{equation:WO(z)-as-coend}
\W \calO (z) = W\otimes_{\rfor_{/z}}\calO_z = \left(\coprod_{g \colon y \to z} W(g) \times \calO(y) \right)\modsim
\end{equation}
where $(W_{\Sigma}(f)t, \alpha)\sim(t, \calO(f)\alpha)$ for every $f \colon x \to y$, $t \in W(g)$ and $\alpha \in \calO(x)$.
Now a forest $h \colon z \to w$ with the natural transformations above gives us a map
\begin{equation}
  \label{equation:WO(z)-to-WO(w)}
  W\calO(z) = W \!\otimes_{\rfor_{/z}} \!\calO_z \xrightarrow{W_\infty(h) \otimes \id} W h_* \!\otimes_{\rfor_{/z}}\!\calO_z \to W\!\otimes_{\rfor_{/w}} \!\calO_w = W\calO(w).
\end{equation}
This defines $W\calO$ as a functor $\rfor \to \topl$.  This functor is symmetric monoidal, so $\W\calO$ is a $K$-colored operad.

In the sequel, we will define several variants on the category $\for$.  Each of these variants admits a functor to $\for$ and we want a corresponding $W$ construction for each.
\begin{definition}
	\label{definition:WvarO}
	If $\for^{\mathrm{var}}$ is any symmetric monoidal category equipped with a functor $\for^{\mathrm{var}} \to \for$, let $W_{\mathrm{var}}\calO \colon \rfor^{\mathrm{var}} \to \topl$ be defined by \ref{equation:WO(z)-as-coend} and \ref{equation:WO(z)-to-WO(w)} where use the given functor to replace $\for_{/z}$ by $\for^{\mathrm{var}}_{/v}$ for an object $v \in \for^{\mathrm{var}}$.
\end{definition}

\subsection{\texorpdfstring{$\W\calO$}{WO} is cofibrant}
\label{section:WO-is-cofibrant}

\begin{definition}
  \label{definition:weighted-forests}
  Let $f \colon x \to y$ be a forest.  For $I \subset I_y \sqcup I_x \sqcup J_x$ and $j \in J_y$ let $I(j)$ denote $I \cap [f|x]\inv(j)$, the set of elements of $I$ living over $j$.   A \emph{weighted young forest} is a pair $(x, \omega_x)$ where $x$ is a young forest and $\omega_x \colon J_x \to \mathbb Z_{\geq 0}$ is any function, called the \emph{weight of $x$}.    A \emph{weighted forest} $f \colon (x, \omega_x) \to (y, \omega_y)$ is a forest $f \colon x \to y$ such that for all $j \in J_y$,
  \begin{equation}
    \label{equation:omega-y-in-terms-of-f-and-omega-x}
    \omega_y(j) \geq \#E(f) + \sum_{{i \in J_x(j)}} \!\!\!\omega_x(i).
  \end{equation}
\end{definition}


If $g \colon (y,\omega_y) \to (z, \omega_z)$ is a weighted forest.  Then one can show that for each $j \in J_z$, we have
\begin{align}
\label{equation:E(gf)-is-E(g)-plus-E(f)-minus-un(f)}
	\#\rmE(gf)(j) &= \#\rmE(g)(j) + \sum_{i \in J_y(j)} \#\rmE(f)(i)-\#\rmun(f)(i) \\
\notag
 & = \#\rmE(g)(j) + \#\rmE(f)(j)  - \#\rmun(f)(j).
\end{align}
This implies $gf \colon x \to z$ defines a weighted forest $gf \colon (x, \omega_x) \to (z, \omega_z)$.

\begin{remark}
	\label{remark:weight-limits-internal-edges}
	If $f \colon (x, \omega_x) \to (y,\omega_y)$ is a weighted forest and $\omega_y \leq k$ then certainly $\#E(f) \leq k$.  Furthermore, if $g \colon (y, \omega_y) \to (z, \omega_z)$ is a weighted forest where $\omega_z \leq k+1$ and $\#E(gf)(j) = k+1$ for some $j \in J_z$, then equations~\ref{equation:omega-y-in-terms-of-f-and-omega-x} and \ref{equation:E(gf)-is-E(g)-plus-E(f)-minus-un(f)} imply that $\omega_x(i) = 0$ for all $i \in J_x(j)$, $\#\rmun(f)(j) = 0$, and $\#E(g) \geq 1$.
\end{remark}

\begin{definition}
  Disjoint union of forests extends to disjoint union of weighted forests. Let $\rfor_\omega$ denote the symmetric monoidal category of weighted forests.
  For each $k \geq 0$, let $\rfor_{k}$ denote the full subcategory of $\rfor_\omega$ generated by objects of the form $(x,\omega_x)$ such that $\omega_x(j) \leq k$ for every $j \in J_x$.  Let $\Op{k}$ denote the category of \emph{weight $k$ operads}, which are symmetric monoidal functors $\rfor_k \to \topl$.   The category $\Op{0}$ is often denoted $\Coll$ in the literature and is called the category of \emph{$K$-colored pointed collections} in $\topl$.  All of these categories are endowed with the projective model structure via the forgetful functor $\for_k \to \Coll$.  In particular, left adjoints preserve cofibrations.
\end{definition}

We will often refer to the weighted forest $(x, \omega_x)$ simply as  $x$ when the weight is understood. If $\omega_x(j) \leq k$ for all $j \in J_x$, we write $\omega_x \leq k$ or say $x$ \emph{has weight } $\leq k$.  Note that if $f \colon x \to y$ is a weighted forest, then $x$ has weight $\leq k$ if $y$ has weight $\leq k$.

\begin{remark}
  \label{remark:expound for0}
  The only morphisms in the category $\rfor_0$ are those $f \colon x \to y$ which satisfy $f(J_x) \subset J_y$.  In other words, $f$ has no internal edges.  However $f$ may still have some unit edges.  Every young forest $x$ is a disjoint union of young forests of the form $I \to \{c\}$, i.e. young \emph{trees}.  Denote this young tree by $(I;c)$.  A pointed collection $\calO \colon \rfor_{0} \to \topl$ consists of spaces $\calO(I;c)$ for every young tree $(I;c)$.  In addition the trees $f \colon (I;c) \to (I;c)$ with no unit edges define an action of $\aut(I)$ on $\calO(I;c)$.  Each unit tree $(\emptyset; \emptyset) \to (c;c)$ defines a map $\ast \to \calO(c;c)$ which, if $\calO$ is an operad, is the $c$-colored unit of $\calO$.
\end{remark}

The categories $\rfor_k$ define a functor from the poset $\mathbb Z_{\geq 0}$ to the category of symmetric monoidal categories.  That is, for each $k \leq \ell$ there are inclusions $\rfor_{k} \to \rfor_{\ell}$, and these inclusions are all compatible with one another.   The colimit of this these categories is $\rfor_\omega$.  There are forgetful functors $\rfor_{k} \to \rfor$, compatible with the inclusions $\rfor_{k} \to \rfor_{\ell}$, and the induced map from the colimit is the canonical forgetful functor $\rfor_{\omega} \to \rfor$.  For $k \leq \ell$ let $U^{\ell}_k$ denote the induced forgetful functor $\Op{\ell} \to \Op{k}$, and let $U_k \colon \Op \to \Op{k}$ be the functor sending an operad $\calO$ to \emph{underlying pointed collection of $\calO$}.  Let $F^k_\ell$ and $F^k$ denote the left adjoints of $U_k^\ell$ and $U_k$ respectively.


Let $\calO$ be an operad.  Since each $k\geq 0$ defines a symmetric monoidal category $\for_k$ equipped with a symmetric monoidal functor $\for_k \to \for$ definition~\ref{definition:WvarO} to define $W_k\calO\colon\rfor_{k} \to \topl$.  Precisely, this is the weight $k$ operad which sends $(z, \omega_z)$ to
\begin{equation}
  \label{equation:WkO(z)}
  \W_k\calO(z,\omega_z) \coloneqq \W \otimes_{\rfor_{k/z} } \calO_z = \Bigg(\raise1.5ex\hbox{$\displaystyle\coprod_{g \colon (y,\omega_y) \to (z, \omega_z)} \W(g) \times \calO(y) $}\Bigg)\modsim,
\end{equation}
where $(W_\Sigma(f)t, \alpha) \sim (t, \calO(f)\alpha)$ for $f \colon (x, \omega_x) \to (y, \omega_y)$, $t \in W(g)$, and $\alpha \in \calO(x)$.  Note that $x$ and $y$ necessarily have weight $\leq k$ since $z$ does.  If $w$ is a young forest of weight $\leq k$ and $h \colon (z, \omega_z) \to (w, \omega_w)$ is a weighted forest, then $W_k\calO(h) \colon W_k\calO(z) \to W_k\calO(w)$ is defined exactly as in \ref{equation:WO(z)-to-WO(w)}.

The inclusion $\rfor_{k} \to \rfor_{k+1}$ induces a map of operads $W_{k} \calO \to U^{k+1}_{k} W_{k+1}\calO$.  To describe the adjoint of this map, note that
\begin{equation}
  \label{equation:FkWO(z)}
  F^{k}_{k+1} W_{k}\calO(z,\omega_z) = \Bigg(\raise2ex\hbox{$\displaystyle\coprod_{\substack{f \colon (x, \omega_x) \to (y, \omega_y)\\ g \colon (y, \omega_y) \to (z,\omega_z)}} \!\!\!\!\! W(f) \times \calO(x)$}\Bigg)\modsim,
\end{equation}
  where $\omega_x, \omega_y \leq k$ and $\omega_z \leq k+1$.  
To describe the relation $\sim$ we describe maps $\mu \colon F^k_{k+1}W_k \calO(z) \to X$ for an arbitrary space $X$.  Such a map is given by a collection $\{\mu(g,f) \colon W(f) \times \calO(y) \to X\}$ where $g \colon y \to z$ and $f \colon x \to y$ are weighted forests, and $\omega_x, \omega_y \leq k$.  These maps must make the diagrams in \ref{diagram:FWO-to-X-two-squares} commute for all $q \colon x' \to x$, $p \colon y' \to y$, and $f' \colon x' \to y'$ in $\for_k$.
\begin{equation}
  \label{diagram:FWO-to-X-two-squares}
  \begin{tikzpicture}[numbered picture, node distance=6ex and 10em,
  math nodes, text height=2ex, text depth=0.75ex,
  every node/.style={on grid,inner sep=1pt},
  mn/.style={inner sep=1pt, font=\scriptsize}]
    \node (ul) {W(f)\!\times\! \calO(x')};
    \node[right=of ul] (ur) {W(fq)\!\times\! \calO(x')};
    \node[below=of ul] (ll) {W(f)\!\times\!\calO(x)};
    \node[right=of ll] (lr) {X};
    \path[->]
      (ul) edge node[above,mn] {W_\Sigma(q)\!\times\! 1} (ur)
      (ul) edge node[left,mn] {1 \!\times\! \calO(q)} (ll)
      (ur) edge node[right,mn,inner sep=3pt] {\mu(g,fq)} (lr)
      (ll) edge node[above,mn] {\mu(f,g)} (lr);
    \begin{scope}[node distance=6ex and 4.5em]
    \node[right=2.4cm of ur] (ul) {W(f')\!\times\!\calO(x')};
    \node[below right=of ul] (lm) {X};
    \node[above right=of lm] (ur) {W(pf')\!\times\!\calO(x')};
    \end{scope}
    \path[->]
      (ul) edge node[above,mn] {W_\infty(p)} (ur)
      (ul) edge node[left,mn,inner sep=3.5pt] {\mu(gp,f')} (lm)
      (ur) edge node[right,mn,inner sep=1.5pt] {\mu(g,pf')} (lm);
  \end{tikzpicture}
\end{equation}
Given $g$ and $f$ as above, let $\iota(g,f) \colon W(f) \times \calO(x) \to W(gf) \times \calO(x)$ be $W_\infty(g) \times 1$.  If $((g,f), t, \alpha)$ represents a point $\beta$ of $F^k_{k+1}W_k\calO(z,\omega_z)$ then its image in $W_{k+1}\calO(z, \omega_z)$ is represented by $\iota(g,f)((g,f), t, \alpha) = (gf, W_\infty(g)t, \alpha)$.

A map $W_{k+1}\calO(z) \to X$, where $\omega_z \leq k+1$, consists of a collection maps $\{\eta(g) \colon W(g) \times \calO(y) \to X\}$ indexed by the set of all weighted forests $g \colon y \to z$ in $\for_{k+1}$.  These maps must make diagram~\ref{diagram:WO-to-X-square} commute for every weighted forest $q \colon y' \to y$ in $\for_{k+1}$.
\begin{equation}
  \label{diagram:WO-to-X-square}
  \begin{tikzpicture}[numbered picture, node distance=6ex and 12em,
  math nodes, text height=1.5ex, text depth=0.75ex,
  every node/.style={on grid,inner sep=1pt},
  mn/.style={inner sep=1pt, font=\scriptsize}]
    \node (ul) {W(g) \times \calO(y')};
    \node[right=of ul] (ur) {W(gq) \times \calO(y')};
    \node[below=of ul] (ll) {W(g) \times \calO(y)};
    \node[right=of ll] (lr) {X};
    \path[->]
      (ul) edge node[above,mn] {W_\Sigma(q) \times 1}(ur)
      (ul) edge node[left,mn] {1 \times \calO(q)} (ll)
      (ur) edge node[right,mn] {\eta(gq)} (lr)
      (ll) edge node[above,mn] {\eta(g)} (lr);
  \end{tikzpicture}
\end{equation}




For $g \colon y \to z$ a forest in $\for_{k+1}$, let $(W\times \calO)^+_k(g)$ be $W(g) \times \calO(y)$ if $g$ has $\leq k$ internal edges.  Otherwise let $(W\times \calO)^+_{k}(g) \subset W(g) \times \calO(y)$ be the set of $(t,\alpha)$ such that $t(\epsilon) = 0$ or $t(\epsilon)=\infty$ or $\alpha(j) = \id$ for some $\epsilon \in E(g)$ or $j \in V(g)$.  Define the map $(W\times \calO)^+_{k}(g) \to F^k_{k+1}W_k\calO(z)$ by collapsing any edge labeled $0$ and deleting any vertex labeled with the identity.  The square in diagram~\ref{diagram:FWO-to-WO-pushout} is a pushout.  Indeed, if we are given $\eta_T([g]) \colon (W(g) \times \calO(\dom g))_{\aut(g)} \to T$ for every $[g]\in \pi_0 \for_{k+1/z}$ and $\mu_T \colon F^k_{k+1}W_k\calO(z) \to T$, such that
\begin{equation}
	\label{diagram:FWO-to-WO-pushout}
\begin{tikzpicture}[numbered picture, node distance=8ex and 15em, display math nodes, text height=2ex, text depth=0.75ex,every node/.style={on grid,inner sep=1pt},mn/.style={inner sep=1pt, font=\scriptsize}]
	\node (ul) {\coprod_{[g] \in \pi_0 \for_{k+1/z}} ((W\times\calO)^+_{k}(g))_{\aut(g)} };
\node[right=of ul] (ur) {F^k_{k+1}W_k\calO(z)};
\node[below=of ul] (ll) {\coprod_{[g]\in\pi_0\for_{k+1/z}} (W(g) \times \calO(\dom g))_{\aut(g)}};
\node[right=of ll] (lr) {W_{k+1}\calO(z)};
\path[->]
	(ul) edge (ur)
(ul) edge (ll)
(ur) edge (lr)
(ll) edge (lr);
\end{tikzpicture}
\medskip
\end{equation}
Using the techniques of Berger and Moerdijk \cite[section 2]{} one can also show that if $U_0\calO$ is a cofibrant pointed collection then $(W\times \calO)^+_k(g) \to W(g) \times \calO(\dom g)$ is an $\aut(g)$-cofibration, where $\aut(g)$ is the automorphism group of $g$ as an object of the category $\for_{k+1/z}$.  This implies the map on the left in \ref{diagram:FWO-to-WO-pushout} is an $\aut(z)$-cofibration.  Thus $U_0F^k_{k+1}W_k\calO \to U_0W_{k+1}\calO$ is a cofibration of pointed collections.

\begin{lemma}
	\label{lemma:FWO-to-WO-cofibration-of-weight-k-operads}
  The natural map $\iota_k\colon F^{k}_{k+1} W_{k}\calO \to W_{k+1}\calO$ is a cofibration of weight $k+1$ operads.
\end{lemma}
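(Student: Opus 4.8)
The plan is to promote the collection-level conclusion reached just before the lemma --- that $U_0 F^k_{k+1}W_k\calO\to U_0W_{k+1}\calO$ is a cofibration of pointed collections --- to the operad level, by exhibiting $\iota_k$ as a single cobase change, inside $\Op{k+1}$, of the free weight $k+1$ operad applied to a cofibration of pointed collections. This suffices: the model structure on $\Op{k+1}$ is transferred from $\Coll$ along $U_0$, the free functor $F^0_{k+1}\colon\Coll\to\Op{k+1}$ factors as $F^{k}_{k+1}\circ F^0_{k}$, each of these left adjoints preserves cofibrations, and cobase changes of cofibrations are cofibrations. Throughout one may restrict attention to young trees: $W_{k+1}\calO$, $F^k_{k+1}W_k\calO$ and $\iota_k$ are symmetric monoidal and $\for_{k+1/(z_1\sqcup z_2)}\simeq\for_{k+1/z_1}\times\for_{k+1/z_2}$, so a morphism of weight $k+1$ operads, being a symmetric monoidal natural transformation, is determined by its components at young trees; moreover for a forest $g$ over a young tree $z$ of weight $\leq k+1$ one has $\#E(g)\leq k+1$, so the condition that $g$ has more than $k$ internal edges simply means $\#E(g)=k+1$.

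The first step builds the square. Define pointed collections $\calP\hookrightarrow\calQ$ packaging the two rows of \eqref{diagram:FWO-to-WO-pushout}: on a young tree $(I;c)$ put $\calQ(I;c)=\coprod_{[g]\in\pi_0\for_{k+1/(I;c)}}(W(g)\times\calO(\dom g))_{\aut(g)}$, and let $\calP(I;c)\subset\calQ(I;c)$ be obtained by shrinking each summand with $\#E(g)=k+1$ to $(W\times\calO)^+_k(g)$; the $\aut(I)$-action is by post-composition on $g$ and the basepoint is the unit-tree summand. That $\calP\hookrightarrow\calQ$ is a cofibration of pointed collections is precisely what is assembled in the paragraph preceding the lemma, granting that $U_0\calO$ is a cofibrant pointed collection: summand by summand the Berger--Moerdijk argument makes $(W\times\calO)^+_k(g)\hookrightarrow W(g)\times\calO(\dom g)$ an $\aut(g)$-cofibration, and passing to $\aut(g)$-orbits and then to coproducts over $\pi_0\for_{k+1/(I;c)}$ yields an $\aut(I)$-cofibration at every tree. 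The two horizontal maps are the adjoints of the collection maps $\calQ\to U_0W_{k+1}\calO$ and $\calP\to U_0F^k_{k+1}W_k\calO$ whose $[g]$-summands are, respectively, the coend structure map of \eqref{equation:WkO(z)} and the map that collapses the $0$-edges and deletes the identity-labeled vertices recalled before the lemma; a direct check --- using that in $W_{k+1}\calO$ the coend relation $(W_\Sigma(f)t,\alpha)\sim(t,\calO(f)\alpha)$ absorbs exactly those collapsings and deletions --- shows these assemble into a commuting square whose right vertical edge is $\iota_k$.

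It remains to prove this square is a \emph{pushout in $\Op{k+1}$} --- the step I expect to be the main obstacle. Colimits of operads are not computed on underlying collections, so a priori freely adjoining the generators $\calQ/\calP$ as part of an operad could graft them into configurations absent from $\for_{k+1/z}$. That this does not happen is forced by the weight bound: by \eqref{equation:omega-y-in-terms-of-f-and-omega-x}, \eqref{equation:E(gf)-is-E(g)-plus-E(f)-minus-un(f)} and Remark~\ref{remark:weight-limits-internal-edges}, a generator $g$ of $\calQ/\calP$ has $k+1$ internal edges over the root of its underlying tree, and that root vertex already consumes its full weight budget $k+1$ (so all vertex-weights over it vanish); hence grafting $g$ beneath any further internal edge would push $\#E$ over the new root past $k+1$, and $g$ is not the target of any edge-collapsing morphism out of a forest with more internal edges. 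Thus $g$ is extremal, and any operadic composite of $\calQ$-generators already represents a point of $W_{k+1}\calO$ through the coend relations. Comparing universal properties then closes the argument: a map out of the putative pushout into an arbitrary weight $k+1$ operad $\calR$ unwinds, through the two adjunctions and the presentations \eqref{equation:WkO(z)}, \eqref{diagram:FWO-to-X-two-squares} and \eqref{diagram:WO-to-X-square}, to exactly the data of a map $W_{k+1}\calO\to\calR$, the operadic composition of $\calR$ imposing no further constraint on the $\calQ/\calP$-cells by the extremality just noted; so the pushout is $W_{k+1}\calO$ and the induced map is $\iota_k$. Therefore $\iota_k$ is the cobase change of the cofibration $F^0_{k+1}(\calP\hookrightarrow\calQ)$, hence a cofibration of weight $k+1$ operads. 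The same combinatorial core can instead be run as a direct lifting argument against an underlying trivial fibration $p$, extending a given partial lift over each cell $(W\times\calO)^+_k(g)\hookrightarrow W(g)\times\calO(\dom g)$ by an $\aut(g)$-equivariant lift against $p$ and using Remark~\ref{remark:weight-limits-internal-edges} to check the assembled collection map is a map of operads.
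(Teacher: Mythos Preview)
The paper takes the direct lifting route you sketch in your final sentence, not the pushout-in-$\Op{k+1}$ argument that occupies most of your proposal. Given a lifting problem for $\iota_k$ against a trivial fibration $\pi\colon\calP\to\calQ$ in $\Op{k+1}$, the paper first uses the collection-level cofibration established just before the lemma to produce a lift $\nu$ of pointed collections, and then argues that $\nu$ is \emph{automatically} a map of weight $k+1$ operads. The only thing to check is naturality of $\nu$ with respect to an arbitrary $h\colon z\to z'$ in $\for_{k+1}$. If $h$ has no internal edges this is part of $\nu$ being a collection map. If $h$ has at least one internal edge then the weight inequality forces $\omega_z\leq k$, and on such $z$ the map $\iota_k(z)$ is an isomorphism; naturality at $h$ then follows formally from the fact that $\eta=\nu\iota_k$ is already an operad map, via a four-line diagram chase. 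That single observation --- $\iota_k(z)$ is an isomorphism whenever $z$ can be the source of a forest with an internal edge --- is the whole content of the proof.

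Your main approach is a reasonable strategy, but the decisive step --- that the square with $F^0_{k+1}$ applied to the left column is a pushout \emph{in $\Op{k+1}$} --- is under-argued. ``Comparing universal properties'' hides real work: a collection map $\calQ\to U_0\calR$ supplies maps $\eta(g)$ for every $[g]$, but these are not a priori compatible with the coend relations of diagram~\eqref{diagram:WO-to-X-square}, and one must show that the agreement on $\calP$ together with the given operad map $F^k_{k+1}W_k\calO\to\calR$ forces those relations. This can be carried out, and your extremality observation via Remark~\ref{remark:weight-limits-internal-edges} is the correct combinatorial input, but the verification ends up re-deriving the paper's key isomorphism fact inside a more elaborate packaging. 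The direct lifting argument is shorter and isolates the one idea more transparently; I would promote your last sentence to the main proof and drop the pushout detour.
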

\begin{proof}	
	Consider a commutative diagram of weight $k$ operads where $\pi(z) \colon \calP(z) \to \calQ(z)$ is a fibration for every $z \in \for_{k+1}$,
  \begin{equation}
    \label{diagram:commutative-diagram-of-weight-k-operads}
    \begin{tikzpicture}[numbered picture, node distance=6ex and 5em,
    math nodes, text height=2ex, text depth=0.75ex,
    every node/.style={on grid,inner sep=1pt},
    mn/.style={inner sep=1pt, font=\scriptsize}]
      \node (ul) {F^{k}_{k+1}W_{k}\calO};
      \node[right=of ul] (ur) {\calP};
      \node[below=of ul] (ll) {W_{k+1}\calO};
      \node[right=of ll] (lr) {\calQ.};
      \path[->]
        (ul) edge node[above,mn] {\eta} (ur)
        (ul) edge node[left,mn] {\iota} (ll)
        (ur) edge node[right,mn] {\pi} (lr)
        (ll) edge node[above,mn] {\mu} (lr);
    \end{tikzpicture}
  \end{equation}
  By the discussion below diagram~\ref{diagram:FWO-to-WO-pushout}, there is a lift on the level of pointed collections, $\nu \colon U_0\W_{k+1}\calO \to U_0\calP$, $\nu\iota = \eta$, $\pi\nu = \mu$.  We will show that this is automatically a lift on the level of weight $k+1$ operads.  The condition that $\nu$ is a morphism of pointed collections means that the square on the right in diagram~\ref{diagram:lift-of-collections-is-lift-of-operads} commutes for all $h \colon z \to z'$ with no internal edges, where $z$, $z'$ are objects of $\for_{k+1}$.  We want to show that the square on the right in \ref{diagram:lift-of-collections-is-lift-of-operads} commutes for all $h \colon z \to z'$ in $\for_{k+1}$.  By assumption, the square on the left, and the top and bottom triangles commute.
  \begin{equation}
  	\label{diagram:lift-of-collections-is-lift-of-operads}
    \begin{tikzpicture}[numbered picture, node distance=7ex and 7em, math nodes, text height=2ex, text depth=0.75ex, every node/.style={on grid,inner sep=1pt},mn/.style={inner sep=1pt, font=\scriptsize}]
  		\node (ul) {W_{k+1}\calO(z)};
    	\node[right=of ul] (ur) {\calP(z)};
    	\node[below=of ul] (ll) {W_{k+1}\calO(z')};
    	\node[right=of ll] (lr) {\calP(z')};
    	\begin{scope}[node distance=7ex and 10em]
    		\node[left=of ul]  (ull) {F^{k}_{k+1}W_{k}\calO(z)};
    		\node[below=of ull] (lll) {F^{k}_{k+1}W_{k}\calO(z')};
    	\end{scope}
			\path[->]
				(ul) edge node[above, mn] {\nu(z)} (ur)
				(ul) edge node[left, mn] {W_{k+1}\calO(h)} (ll)
				(ur) edge node[right, mn] {\calP(h)} (lr)
				(ll) edge node[above, mn] {\nu(z')} (lr)
				(ull) edge node[above, mn] {\iota(z)} (ul)
				(ull) edge node[left, mn] {F^{k}_{k+1}W_{k}\calO(h)} (lll)
				(lll) edge node[below, mn] {\iota(z')} (ll)
				(ull) edge[bend left] node[above,mn] {\eta(z)} (ur)
				(lll) edge[bend right] node[above, mn] {\eta(z')} (lr);
		\end{tikzpicture}
	\end{equation}
	If $h$ has at least one internal edge, then $z$ has weight at most $k$.  This implies $\iota(z)$ is an isomorphism.  Thus,
  \begin{align*}
  		\nu(z')W_{k+1}\calO(h) &= \nu(z')\iota(z') F^{k}_{k+1}W_{k}\calO(h) (\iota(z)) \inv \\
  	& = \eta(z') F^{k}_{k+1}W_{k}\calO(h) (\iota(z))\inv \\
  	& = \calP(h) \eta(z)  (\iota(z))\inv \\
  	& = \calP(h) \nu(z).
  \end{align*}
\end{proof}

\begin{definition}
  \label{definition:WomegaO-and-WO}
  Let $W_\omega\calO$ denote the weight $\omega$ operad $\colim_k F^k_\omega W_k\calO$.  Let $W\calO = F^\omega W_\omega\calO \in \Op$. Let $\varepsilon \colon W\calO \to \calO$ denote the adjoint of the natural map $W_\omega\calO \to U_\omega \calO$.
\end{definition}

\begin{proposition}
  \label{proposition:WO-cofibrant}
  If $U_0\calO$ is a cofibrant collection, then the natural map $\epsilon \colon W\calO \to \calO$ is a weak equivalence and $F^0U_0 \calO \to W\calO$ is a cofibration.  In particular, $W\calO$ is a cofibrant operad.
\end{proposition}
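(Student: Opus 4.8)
The plan is to prove the two assertions by different routes. That $F^0U_0\calO \to W\calO$ is a cofibration (whence $W\calO$ is cofibrant) is essentially formal, given Lemma~\ref{lemma:FWO-to-WO-cofibration-of-weight-k-operads} and the standing fact that the relevant left adjoints preserve cofibrations; that $\epsilon$ is a weak equivalence requires an explicit homotopy, in the spirit of Boardman--Vogt and Berger--Moerdijk~\cite{berger-moerdijk-w}.

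The first step is to identify the bottom of the weight filtration. A forest of weight $0$ has no internal edges, so $W(g)=\map(E(g),[0,\infty])=\ast$ for every $g\in\rfor_{0/z}$, and the coend~\ref{equation:WkO(z)} with $k=0$ reduces to $\colim_{\rfor_{0/z}}\calO(\dom g)$. Since $\id_z$ is terminal in $\rfor_{0/z}$, this colimit is just $\calO(z)$; hence $W_0\calO = U_0\calO$ as pointed collections, and therefore $F^0_\omega W_0\calO = F^0_\omega U_0\calO$ and $F^\omega F^0_\omega W_0\calO = F^0U_0\calO$.

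For the cofibration statement, Lemma~\ref{lemma:FWO-to-WO-cofibration-of-weight-k-operads} says each $\iota_k\colon F^k_{k+1}W_k\calO \to W_{k+1}\calO$ is a cofibration in $\Op{k+1}$. Applying the left adjoint $F^{k+1}_\omega$ and using $F^{k+1}_\omega F^k_{k+1}=F^k_\omega$ (both being left adjoint to the restriction $U^\omega_k$), the transition map $F^k_\omega W_k\calO = F^{k+1}_\omega F^k_{k+1}W_k\calO \xrightarrow{F^{k+1}_\omega(\iota_k)} F^{k+1}_\omega W_{k+1}\calO$ is a cofibration, since left adjoints preserve cofibrations. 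Thus $W_\omega\calO=\colim_k F^k_\omega W_k\calO$ is a transfinite composite of cofibrations, so $F^0_\omega W_0\calO \to W_\omega\calO$ is a cofibration in $\Op{\omega}$; applying $F^\omega$ (again a left adjoint, with $F^\omega F^0_\omega = F^0$) gives that $F^0U_0\calO = F^\omega F^0_\omega W_0\calO \to F^\omega W_\omega\calO = W\calO$ is a cofibration of operads. Since $U_0\calO$ is cofibrant and $F^0$ carries the initial collection to the initial operad, $F^0U_0\calO$ is a cofibrant operad, and composing $\emptyset \to F^0U_0\calO \to W\calO$ shows $W\calO$ is cofibrant.

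For the weak equivalence it suffices to treat young trees $(I;c)$: a weak equivalence of operads is detected on underlying collections, and both $W\calO$ and $\calO$ are symmetric monoidal, hence carry a forest to the product of their values on its tree components. Unwinding the coends and colimits exhibits $W\calO(I;c)$ as the classical Boardman--Vogt space of $(I;c)$-trees decorated by operations of $\calO$ with internal edges labelled in $[0,\infty]$, modulo collapse of length-$0$ edges, automorphisms, and deletion of identity-labelled unary vertices. In these terms $\epsilon$ forgets the edge lengths and composes the decorations, and it has the evident section $\sigma$ sending an operation to its corolla, with $\epsilon\sigma=\id$. A homotopy $\id\simeq\sigma\epsilon$ is obtained by uniformly contracting all internal edge lengths to $0$, scaling in the coordinate that identifies $[0,\infty]$ with $[0,1]$ so as to remain continuous at $\infty$. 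The main obstacle is checking that this scaling descends to the coend --- compatibility with the relations defining $W_\Sigma$, notably the summation of lengths at merged edges, and with the deletion of identity vertices --- and that it is natural in $(I;c)$; this is where cofibrancy of $U_0\calO$ enters, in the guise of well-pointedness of $\calO$, exactly as in~\cite{berger-moerdijk-w}. Granting this, $\epsilon$ is a deformation retract, hence a weak equivalence, and the proposition follows.
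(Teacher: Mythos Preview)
Your argument is correct and follows essentially the same route as the paper. For the cofibration statement the paper runs the induction inside $\Op{k}$ (applying $F^{k-1}_k$ at each step and composing with Lemma~\ref{lemma:FWO-to-WO-cofibration-of-weight-k-operads}) before passing to the colimit, whereas you push each stage immediately to $\Op{\omega}$ via $F^{k+1}_\omega$; these are trivially equivalent rearrangements of the same chain of left adjoints. For the weak equivalence the paper is terser still, simply asserting that the section given by the adjoint of the inclusion of the free operad is a homotopy inverse, which is exactly the corolla section and edge-length contraction you spell out.
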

\begin{proof}
	By induction, $F^0_kU_0 \calO \to W_k \calO$ is a cofibration in $\Op{k}$.  Indeed, $U_0 \calO = W_0\calO$ and if $F^0_{k-1}U_0 \calO \to W_{k-1} \calO$ is a cofibration in $\Op{k-1}$, then applying $F^{k-1}_k$ we get a cofibration in $\Op{k}$, $F^0_k U_0 \calO \to F^{k-1}_kW_{k-1}$.  Then lemma ?? shows that $F^{k-1}_k W_{k-1}\calO \to W_k\calO$ is a cofibration in $\Op{k}$.   Taking $\colim_k$, we get $F^0_\omega U_0 \calO \to W_\omega\calO$ a cofibration in $\Op{\omega}$.  Apply $F^\omega$, then $F^0U_0 \calO \to \W\calO$ is a cofibration in $\Op$.  Since $F^0$ preserves cofibrant objects, $F^0U_0 \calO$ is cofibrant.  Thus $\W\calO$ is cofibrant.  Finally, the map $U_0W\calO \to U_0\calO$ has homotopy inverse given by the adjoint of the inclusion of the free operad.	
\end{proof}

\section{The swiss cheese operad}
\label{section:the-swiss-cheese-operad}

Throughout the remainder of this paper, let $K=\{\f,\h\}$ be the set of colors.
Let $C_\h \hookrightarrow  \for$ be the full subcategory of $\for$ spanned by young forests $x$ with $J_x$ of color $\h$.  Let $C_\h^{\f1}$ denote the full subcategory of $C_\h$ spanned by young forests $x$ such that $\#_\f x\inv(j) \leq 1$ for all $j \in J_x$, where $\#_c I$ denotes the number of $c$-colored elements of a colored set $I$. Using the construction in definition~\ref{definition:WvarO} we can define the operad $W_\h \calO \colon C_\h \to \topl$ using the functor $C_\h \to \for$.  In the same way, define $W_{\h}^{\f1} \calO \colon C_{\h}^{\f1} \to \topl$ using the functor $C_{\h}^{\f1} \to \for$.

For $k \in \mathbb Z_{\geq -1} \sqcup \{ \omega\}$ define $C_k$ to be the full subcategory of weighted young forests $(x,\omega_x)$, where $x \in C_\h$, and $\omega_x(j) \leq k$ if $j\in J_x$ satisfies $\#_\f x\inv(j) \geq 2$.  For $f \colon x \to y$, we require condition~\ref{equation:omega-y-in-terms-of-f-and-omega-x}.  For each $k \geq -1$ define $W_{k}^C\calO \colon C_k \to \topl$ using the functor $C_k \to \for$ and the construction in definition~\ref{definition:WvarO}.  The functor categories $\Op^C_k, \Op_{\h}, \Op_{\h}^{\f1}$ are defined in the obvious way.  Adjunctions are denoted
\begin{align}
	\label{gather:adjunctions-notation}
	F^{\f1,k}_{\h,C} \colon \Op^C_k &\leftrightarrows  \Op^{\f1}_\h \colon U^{\h,C}_{\f1,k} &
	F^\ell_k \colon \Op^C_{\ell} &\leftrightarrows \Op^C_k \colon U^k_\ell \\
 	F^k_{\h, C} \colon \Op^{C}_{k} &\leftrightarrows \Op_\h \colon U_k^{\h, C} &
 	F_{\f1} \colon \Op_\h^{\f1} &\leftrightarrows \Op_\h \colon U^{\f1}.
\end{align}

\begin{lemma}
  \label{lemma:iota-1-is-cofibration}
  The natural map $\iota \colon F_{0}^{-1}W_{-1}^C\calO \to W_{0}^C\calO$ is a cofibration in $\Op^C_0$.
\end{lemma}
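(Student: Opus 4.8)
The plan is to mimic the proof of Lemma~\ref{lemma:FWO-to-WO-cofibration-of-weight-k-operads} in the colored setting $C_\h \supset C_\h^{\f1}$, with the role of the weight filtration played here by the weight‐on‐the‐$\geq 2$‐arity‐vertices filtration that defines the categories $C_k$. The essential point is that in $C_0$ all but one of the operadic structure maps are already forced, and a lift of pointed collections automatically upgrades to a lift of $C_0$‐operads. So I would first record the analogue of the pushout square in diagram~\ref{diagram:FWO-to-WO-pushout} for the inclusion $C_{-1} \hookrightarrow C_0$: writing $(W \times \calO)^+_{-1}(g) \subset W(g) \times \calO(\dom g)$ for the subspace of pairs $(t,\alpha)$ with some edge labeled $0$ or $\infty$ or some vertex labeled with the identity (the subspace is everything when $g$ has no internal edges, i.e. when $g \in C_{-1}$), the square
\begin{equation*}
\begin{tikzpicture}[numbered picture, node distance=8ex and 14em, math nodes, text height=2ex, text depth=0.75ex, every node/.style={on grid,inner sep=1pt}]
  \node (ul) {\coprod_{[g]} ((W\times\calO)^+_{-1}(g))_{\aut(g)}};
  \node[right=of ul] (ur) {F^{-1}_{0}W_{-1}^C\calO(z)};
  \node[below=of ul] (ll) {\coprod_{[g]} (W(g) \times \calO(\dom g))_{\aut(g)}};
  \node[right=of ll] (lr) {W_{0}^C\calO(z)};
  \path[->] (ul) edge (ur) (ul) edge (ll) (ur) edge (lr) (ll) edge (lr);
\end{tikzpicture}
\end{equation*}
is a pushout, the coproducts running over $[g] \in \pi_0 (C_0)_{/z}$, with the horizontal maps collapsing $0$‐edges and deleting identity vertices. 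This is verified exactly as in the unindexed case, using that $W^C_0\calO(z)$ is the coend \ref{equation:WO(z)-as-coend} computed over $(C_0)_{/z}$ and that every forest in $C_0$ factors through its subforest with no internal edges. As before, the Berger--Moerdijk cofibrancy argument (their section~2) shows $(W\times\calO)^+_{-1}(g) \to W(g)\times\calO(\dom g)$ is an $\aut(g)$‐cofibration when $U_0\calO$ is a cofibrant collection — but note this lemma does not assume that, so I instead only need the formal part: the left vertical map is a levelwise closed inclusion, hence the square gives, for any fibration $\pi \colon \calP \to \calQ$ of objects of $\Op^C_0$, a lift $\nu \colon U\,W^C_0\calO \to U\,\calP$ of \emph{the underlying $C_{-1}$‐object} against a chosen lift on collections.

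Next I would argue, as in Lemma~\ref{lemma:FWO-to-WO-cofibration-of-weight-k-operads}, that this lift $\nu$ is automatically a morphism in $\Op^C_0$. The naturality squares to check are indexed by morphisms $h \colon z \to z'$ of $C_0$. By Remark~\ref{remark:expound for0} (adapted to $C_\h$), a morphism in $C_0$ with at least one internal edge can only occur when $z'$, hence $z$, has \emph{no} $\geq 2$‐arity vertices carrying positive weight that survive — more precisely, when $\#E(h) \geq 1$ the target vertex over which the edge sits must have weight‐$0$ constraint, forcing $z \in C_{-1}$, so that $\iota(z)$ is an isomorphism; then the naturality square for $h$ follows formally from the naturality squares for the factorization $h = (\text{no-internal-edge part})\circ(\text{stuff in }C_{-1})$ together with commutativity of the outer diagram, by the same four‐line computation
\begin{align*}
\nu(z')\,W^C_0\calO(h) &= \nu(z')\,\iota(z')\,F^{-1}_0 W^C_{-1}\calO(h)\,\iota(z)\inv \\
&= \eta(z')\,F^{-1}_0 W^C_{-1}\calO(h)\,\iota(z)\inv \\
&= \calP(h)\,\eta(z)\,\iota(z)\inv = \calP(h)\,\nu(z).
\end{align*}
When $h$ has no internal edges, the square commutes because $\nu$ is a morphism of pointed collections, which is exactly how it was produced.

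The main obstacle I expect is bookkeeping in the case analysis for $h$: one must be careful that in $C_0$ the relevant weight bound is imposed only on vertices $j$ with $\#_\f x\inv(j) \geq 2$, so the dichotomy "has an internal edge $\Rightarrow$ source lies in $C_{-1}$" is not literally the dichotomy of Remark~\ref{remark:expound for0} and needs the combinatorial identity \ref{equation:E(gf)-is-E(g)-plus-E(f)-minus-un(f)} together with \ref{equation:omega-y-in-terms-of-f-and-omega-x} (as in Remark~\ref{remark:weight-limits-internal-edges}) to pin down: if $\#E(h)(j)\geq 1$ for some root vertex $j$ over which a $\geq 2$‐arity vertex sits, the weight‐$0$ ceiling on that vertex forces everything over it into the no-internal-edge locus. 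Once that dichotomy is nailed down, the rest is the formal lifting argument above; I would also remark that the $\aut(z)$‐equivariance of $\nu$ on collections is inherited because the pushout square and the chosen collection-lift can be taken $\aut(z)$‐equivariantly, so $\nu$ is genuinely a map in $\Op^C_0$ and not merely a levelwise one.
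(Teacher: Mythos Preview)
Your approach overshoots the target and, as written, contains a genuine gap.  The paper's proof is far more elementary than the Lemma~\ref{lemma:FWO-to-WO-cofibration-of-weight-k-operads} machinery you are trying to reproduce.  The key observation you are missing is that for any (connected) $y\in C_0\smallsetminus C_{-1}$ there is \emph{no} weighted forest $x\to y$ with $x\in C_{-1}$ at all: the unique $\geq 2$-$\f$-arity root vertex of $y$ has weight $0$, so condition~\ref{equation:omega-y-in-terms-of-f-and-omega-x} forces the tree over it to have no internal edges, hence to be a single corolla with $\geq 2$ $\f$-inputs, so $x\notin C_{-1}$.  Consequently $F^{-1}_0 W^C_{-1}\calO(y)=\emptyset$ for such $y$, and the same argument shows that the only morphisms in $C_0$ with target such a $y$ are automorphisms (so in particular $W^C_0\calO(y)=\calO(y)$).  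Thus $\iota$ is levelwise either an isomorphism (on $C_{-1}$) or a map out of $\emptyset$ (on $C_0\smallsetminus C_{-1}$), and there are no morphisms in $C_0$ crossing between these two classes of trees in either direction.  A lift $\nu$ can therefore be defined independently on each class and extended symmetric monoidally; naturality is automatic.

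This is also why your proposed pushout square cannot be correct as stated: for a tree $z\in C_0\smallsetminus C_{-1}$ your upper-right corner $F^{-1}_0W^C_{-1}\calO(z)$ is empty, yet your upper-left corner contains the summand for $g=\id_z$, which by your own convention (``the subspace is everything when $g$ has no internal edges'') equals $\calO(z)\neq\emptyset$.  There is no top horizontal map.  The difficulty is that membership in $C_{-1}$ is an arity condition, not a weight bound, so one cannot manufacture a factorisation $g=g'\circ f'$ with $\dom g'\in C_{-1}$ by choosing weights cleverly, as one does in the $\for_k\hookrightarrow\for_{k+1}$ case.  Your later case analysis also has the direction reversed: internal edges of $h\colon z\to z'$ in $C_0$ can occur \emph{only} over root vertices of $z'$ that do \emph{not} carry the weight-$0$ constraint (i.e.\ have $\leq 1$ $\f$-inputs), not the other way around.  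Once you see this, the argument collapses to the three-line proof in the paper.
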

\begin{proof}
  If $y \in C_{-1}$, then $F_{0}^{-1}W_{-1}^C\calO(y) \to W^C_0\calO(y)$ is an isomorphism.  If $y \in C_{0}-C_{-1}$, there is no weighted forest $f\colon x \to y$ with $x \in C_{-1}$.  Thus $F_{0}^{-1}W_{-1}^C\calO(y) = \emptyset$ and $W^C_0\calO(y) = \calO(y)$.   Thus $\iota$ is certainly a cofibration of collections.  If $f \colon x \to y$ has at least one internal edge then $x \in C_0$, so $\iota(x)$ is an isomorphism.  Thus $\iota$ is a cofibration of operads in $\Op^C_0$.
\end{proof}

\begin{lemma}
  \label{lemma:FWCO-to-WCO-cofibration-of-weight-k+1-operads}
  For any $k\geq 0$ the natural map $F^k_{k+1}W^C_{k}\calO \to W^{C}_{k+1}\calO$ is a cofibration in $\Op^C_{k+1}$
\end{lemma}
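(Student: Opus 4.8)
Write $\iota_k\colon F^k_{k+1}W^C_k\calO\to W^C_{k+1}\calO$ for the map in the statement. The plan is to follow the proof of Lemma~\ref{lemma:FWO-to-WO-cofibration-of-weight-k-operads} with $\for_k,\for_{k+1}$ replaced by $C_k,C_{k+1}$: first obtain a lift on the level of collections from a pushout presentation of $W^C_{k+1}\calO$, then check that such a lift is automatically a lift in $\Op^C_{k+1}$.

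Fix a young forest $z\in C_{k+1}$. Just as in \ref{equation:FkWO(z)} one has $F^k_{k+1}W^C_k\calO(z)=\big(\coprod_{f\colon x\to y,\;g\colon y\to z}W(f)\times\calO(x)\big)\modsim$, the coproduct over forests in $C_{k+1}$ with $x,y\in C_k$, and $\iota_k$ carries the class of $((g,f),t,\alpha)$ to that of $(gf,W_\infty(g)t,\alpha)$. For a forest $g\colon y\to z$ in $C_{k+1/z}$ I would set $(W\times\calO)^+_k(g)=W(g)\times\calO(y)$ if $y\in C_k$, and otherwise let $(W\times\calO)^+_k(g)\subset W(g)\times\calO(y)$ be the locus of pairs $(t,\alpha)$ with $t(\epsilon)\in\{0,\infty\}$ for some $\epsilon\in\Edge(g)$, or $\alpha(j)=\id$ for some $j\in V(g)$, mapping to $F^k_{k+1}W^C_k\calO(z)$ by collapsing $0$-edges and deleting $\id$-vertices. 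The first task is to verify the $C$-analogue of the pushout square~\ref{diagram:FWO-to-WO-pushout}: that $W^C_{k+1}\calO(z)$ is the pushout of $F^k_{k+1}W^C_k\calO(z)$ along the coproduct, over $[g]\in\pi_0 C_{k+1/z}$, of the inclusions $\big((W\times\calO)^+_k(g)\big)_{\aut(g)}\hookrightarrow\big(W(g)\times\calO(\dom g)\big)_{\aut(g)}$. The one substantive point is that a cell of $W^C_{k+1}\calO(z)$ indexed by a forest $g\colon y\to z$ has its interior outside the image of $\iota_k$ exactly when $y\notin C_k$: since $W_\infty(g')$ always places an $\infty$-label on the newly created edges, the only cells in the image of $\iota_k$ with all labels finite and no identity vertex are those indexed by forests whose domain already lies in $C_k$; and for $y\notin C_k$ the locus $(W\times\calO)^+_k(g)$ is precisely the preimage of $F^k_{k+1}W^C_k\calO(z)$. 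The remaining verification is the bookkeeping of \ref{diagram:FWO-to-WO-pushout}. Granting the pushout, the Berger--Moerdijk argument quoted after \ref{diagram:FWO-to-WO-pushout} applies verbatim, as it is local to a single forest $g$ and never sees the subcategory $C_{k+1}\subset\for$: when $U_0\calO$ is cofibrant, each $(W\times\calO)^+_k(g)\hookrightarrow W(g)\times\calO(\dom g)$ is an $\aut(g)$-cofibration, the left-hand map of the pushout is an $\aut(z)$-cofibration, and $\iota_k$ is a cofibration of collections. Consequently any lifting problem for $\iota_k$ against a fibration $\pi\colon\calP\to\calQ$ in $\Op^C_{k+1}$ has a solution $\nu$ on the level of collections, with $\nu\iota_k=\eta$ and $\pi\nu=\mu$.

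It remains to check that $\nu$ commutes with every forest $h\colon z\to z'$ in $C_{k+1}$ --- the $C$-analogue of the right-hand square of \ref{diagram:lift-of-collections-is-lift-of-operads} --- knowing this already for $h$ with no internal edges. Suppose $\#\Edge(h)\geq 1$. This is the one place where the structure of $C_\h$ is used. Every internal vertex of a young forest in $C_\h$ has $\h$-coloured root, so every internal edge of a forest over $C_\h$ is $\h$-coloured; hence the $\f$-coloured inputs of any vertex are leaves, and an internal vertex $i\in J_z$ with $\#_\f z\inv(i)\geq 2$ therefore lies over a root $j\in J_{z'}$ with $\#_\f(z')\inv(j)\geq 2$. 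For such $j$ we have $\omega_{z'}(j)\leq k+1$ since $z'\in C_{k+1}$, while condition~\ref{equation:omega-y-in-terms-of-f-and-omega-x} for $h$ gives $\omega_{z'}(j)\geq\#\Edge(h)+\sum_{i'\in J_z(j)}\omega_z(i')\geq 1+\omega_z(i)$, so $\omega_z(i)\leq k$. As $i$ was an arbitrary $\f$-binary-or-larger vertex of $z$, this shows $z\in C_k$, whence $\iota_k(z)$ is an isomorphism exactly as in the proof of Lemma~\ref{lemma:FWO-to-WO-cofibration-of-weight-k-operads} (by the same propagation no forest with codomain $z$ can have domain outside $C_k$, so $C_{k/z}=C_{k+1/z}$). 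The diagram chase ending that proof then applies unchanged,
\begin{align*}
\nu(z')\,W^C_{k+1}\calO(h)&=\nu(z')\,\iota_k(z')\,F^k_{k+1}W^C_k\calO(h)\,(\iota_k(z))\inv\\
&=\eta(z')\,F^k_{k+1}W^C_k\calO(h)\,(\iota_k(z))\inv\\
&=\calP(h)\,\eta(z)\,(\iota_k(z))\inv=\calP(h)\,\nu(z),
\end{align*}
so $\nu$ is a morphism in $\Op^C_{k+1}$ and $\iota_k$ lifts against $\pi$; thus $\iota_k$ is a cofibration in $\Op^C_{k+1}$.

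I expect the first step to be the main obstacle: pinning down, for $g\colon y\to z$ with $y\notin C_k$, the exact portion of $W(g)\times\calO(y)$ that lands in $F^k_{k+1}W^C_k\calO(z)$, so that the pushout square is correct and the $\aut(g)$-cofibrancy of Berger--Moerdijk can be quoted. It is the $C$-version of the corresponding step of Lemma~\ref{lemma:FWO-to-WO-cofibration-of-weight-k-operads} and needs only care, not a new idea; the operad-upgrade is then automatic, the one genuinely new ingredient being the elementary observation that $\f$-coloured edges never occur inside forests over $C_\h$, which is what forces $z\in C_k$ as soon as $h$ has an internal edge.
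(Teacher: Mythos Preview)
Your proposal is correct and follows exactly the route the paper takes: its own proof of this lemma is the single line ``The argument from Lemma~\ref{lemma:FWO-to-WO-cofibration-of-weight-k-operads} works in this case,'' and you have carefully spelled out that argument, including the one new observation needed in the $C_\h$ setting (that every internal edge is $\h$-coloured, so a vertex of $z$ with $\geq 2$ $\f$-inputs lies over a root of $z'$ with $\geq 2$ $\f$-inputs). The only imprecision---shared with the paper's proof of Lemma~\ref{lemma:FWO-to-WO-cofibration-of-weight-k-operads}---is that your inequality should read $\#\Edge(h)(j)$ rather than $\#\Edge(h)$, so one implicitly reduces to the case that $z'$ is a tree before concluding $\omega_z(i)\leq k$; this is harmless since the maps in play are monoidal.
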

\begin{proof}
  The argument from lemma~\ref{lemma:FWO-to-WO-cofibration-of-weight-k-operads} works in this case.
\end{proof}

\begin{corollary}
	\label{corollary:FWhf1O-to-WhO-is-cofibration-of-Oph-operads}
	The natural map $F_{\f1} W_{\h}^{\f1} \calO \to W_{\h} \calO$ is a cofibration in $\Op_{\h}$.
\end{corollary}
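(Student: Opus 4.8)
The plan is to follow the proof of Proposition~\ref{proposition:WO-cofibrant}, using the weight filtration $C_{-1}\subset C_0\subset C_1\subset\cdots$ of $C_\omega$ in place of $\for_0\subset\for_1\subset\cdots$, with Lemmas~\ref{lemma:iota-1-is-cofibration} and~\ref{lemma:FWCO-to-WCO-cofibration-of-weight-k+1-operads} as the successive steps, and then to transport the conclusion across the left adjoints relating $\Op^C_{-1}$, $\Op^C_\omega$, $\Op^{\f1}_\h$ and $\Op_\h$. Write $X_k=F^k_\omega W^C_k\calO$ for $k\geq -1$, so that $W^C_\omega\calO\cong\colim_k X_k$ as in Definition~\ref{definition:WomegaO-and-WO}. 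The transition map $X_{-1}\to X_0$ is the image under the left adjoint $F^0_\omega$ of the cofibration of Lemma~\ref{lemma:iota-1-is-cofibration}, and for $k\geq 0$ the transition map $X_k\to X_{k+1}$ is the image under $F^{k+1}_\omega$ of the cofibration $F^k_{k+1}W^C_k\calO\to W^C_{k+1}\calO$ of Lemma~\ref{lemma:FWCO-to-WCO-cofibration-of-weight-k+1-operads}; since left adjoints preserve cofibrations, every $X_k\to X_{k+1}$ is a cofibration in $\Op^C_\omega$. Hence $F^{-1}_\omega W^C_{-1}\calO=X_{-1}\to\colim_k X_k=W^C_\omega\calO$ is a sequential composite of cofibrations, so it is a cofibration in $\Op^C_\omega$.

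Now apply the left adjoint $F^\omega_{\h,C}\colon\Op^C_\omega\to\Op_\h$, obtaining a cofibration $F^\omega_{\h,C}F^{-1}_\omega W^C_{-1}\calO\to F^\omega_{\h,C}W^C_\omega\calO$ in $\Op_\h$, and identify its two ends with $F_{\f1}W^{\f1}_\h\calO$ and $W_\h\calO$. For the target, $F^\omega_{\h,C}W^C_\omega\calO\cong W_\h\calO$: the coend model of Definition~\ref{definition:WvarO} on $C_\h$ is recovered from the weighted constructions $W^C_k\calO$ by forgetting the weight and passing to the colimit, just as $F^\omega W_\omega\calO$ recovers the coend $W\calO$ in Definition~\ref{definition:WomegaO-and-WO}. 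For the source, the weight-forgetting functor $C_{-1}\to C_\h$ factors through $C_\h^{\f1}$, since an object $(x,\omega_x)$ of $C_{-1}$ has $\#_\f x\inv(j)\leq 1$ for every $j\in J_x$ (otherwise a weight $\leq -1$ would be forced); consequently the restriction functors satisfy $U^{\h,C}_{-1}\cong U^{\h,C}_{\f1,-1}\circ U^{\f1}$ and $U^{\h,C}_{-1}\cong U^\omega_{-1}\circ U^{\h,C}_\omega$, so dualizing gives $F^\omega_{\h,C}F^{-1}_\omega\cong F^{-1}_{\h,C}\cong F_{\f1}\circ F^{\f1,-1}_{\h,C}$, while $F^{\f1,-1}_{\h,C}W^C_{-1}\calO\cong W^{\f1}_\h\calO$ because the weights impose no constraint on $C_\h^{\f1}$, so left Kan extending the weighted coend along $C_{-1}\to C_\h^{\f1}$ returns the plain coend over $C_\h^{\f1}$. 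Under these identifications the cofibration constructed above is the natural map $F_{\f1}W^{\f1}_\h\calO\to W_\h\calO$, which is therefore a cofibration in $\Op_\h$.

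The step I expect to be the main obstacle is the bundle of identifications in the second paragraph: one must verify that the cellular presentation supplied by the weight filtration reassembles, in the colimit, to the plain coend $W$-construction --- on $C_\h$ for the target, and via the left Kan extension along $C_{-1}\to C_\h^{\f1}$ on $C_\h^{\f1}$ for the source. The inductive cofibration argument of the first paragraph is purely formal once Lemmas~\ref{lemma:iota-1-is-cofibration} and~\ref{lemma:FWCO-to-WCO-cofibration-of-weight-k+1-operads} are in hand, and if the comparison between the coend and weighted $W$-constructions has already been recorded --- as it is, implicitly, for $W\calO$ in Definition~\ref{definition:WomegaO-and-WO} --- then the corollary follows at once.
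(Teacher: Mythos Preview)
Your proposal is correct and follows essentially the same route as the paper's own proof: use Lemmas~\ref{lemma:iota-1-is-cofibration} and~\ref{lemma:FWCO-to-WCO-cofibration-of-weight-k+1-operads} to show $F^{-1}_\omega W^C_{-1}\calO \to W^C_\omega\calO$ is a cofibration in $\Op^C_\omega$, then apply $F^\omega_{\h,C}$ and identify source and target via $F^\omega_{\h,C}F^{-1}_\omega = F_{\f1}F^{\f1,-1}_{\h,C}$, $W^{\f1}_\h\calO = F^{\f1,-1}_{\h,C}W^C_{-1}\calO$, and $F^\omega_{\h,C}W^C_\omega\calO = W_\h\calO$. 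The paper simply asserts these identifications as equalities rather than arguing for them, so your second paragraph is more careful than the original, not different from it.
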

\begin{proof}
  We have $W_{\h}^{\f1} \calO = F^{\f1,-1}_{\h,C} W_{-1}^C\calO$ and $F^\omega_{C,\h}W^C_\omega\calO = W_\h \calO$. By lemmas~\ref{lemma:iota-1-is-cofibration} and \ref{lemma:FWCO-to-WCO-cofibration-of-weight-k+1-operads}, $F^{-1}_\omega W_{-1}^C\calO \to W^C_{\omega}\calO$ is a cofibration in $C_\omega$.  Apply $F^\omega_{\h,C}$ and use the fact that $F^\omega_{\h,C} F^{-1}_\omega = F_{\f1}F^{\f1,-1}_{\h,C}$ to get $F_{\f1}W_{\h}^{\f1}\calO \to W_{\h}\calO$.
\end{proof}

The main example of an $\{\f, \h\}$-colored operad is $\swcheese$, where $\f$ stands for \emph{full disc} and $\h$ stands for \emph{half disc}. Fix a dimension $d \geq 1$.  Let $D_\f$ denote the closed unit disc inside $\reals^{d}$, and let $D_\h$ denote the closed unit half-disc $\{ p \in D_\f \mid p_d \geq 0\}$.  For any $K$-colored set $(I,\col_I)$ and any $i \in I$ let $D_i = D_{\col_I(i)}$.  Finally, put $D(I)=\coprod_{i\in I} D_i$.

\begin{definition}
  \label{definition:swcheese-operad}
  Given a $K=\{\f,\h\}$-colored young forest $x$, let $\swcheese(x)$ denote the set of maps $\alpha \colon D(I_x) \to D(J_x)$ such that
  \begin{itemize}
    \item
      For each $i \in I_x$ the restriction of $\alpha$ to $D_i$ lands in $D_{x(i)}$.
    \item
      For each $i \in I_x$ of color $\f$ there is an $r(i) > 0$ and $c(i) \in \reals^d$ such that $\alpha(p) = r(i)p + c(i)$ for all $p \in D(i)$.
    \item
      For each $i \in I_x$ of color $\h$, $x(i)$ has color $\h$ and there is an $r(i) > 0$ and $c(i) \in \reals^{d-1}\times\{0\}$ such that $\alpha(p) = r(i)p + c(i)$ for all $p \in D(i)$.
    \item
      $\alpha$ is an embedding of $D(I_x)$ into $D(J_x)$.
  \end{itemize}
  Let $(n,m)$ denote the $K$-colored set $\{1,\ldots, n+m\}$ where $1\leq i \leq n$ has color $\f$ and $n+1 \leq i \leq n+m$ has color $\h$.  Suppose $J_x = \{j\}$, then an isomorphism $I_x \to (n,m)$ defines an embedding of $\swcheese(x)$ into $\reals^{N}$, where $N = n + nd + m + m(d-1)$.  We endow $\swcheese(x)$ with the topology induced from such an embedding.  If $J_x$ is not a singleton then $x = \coprod_j x|x\inv(j)$ and $\swcheese(x) = \prod_{j\in J_x} \swcheese(x|x\inv(j))$, and we take the product topology on $\swcheese(x)$.

  If $f \colon x \to y$ is a $K$-colored forest, the map $\swcheese(f) \colon \swcheese(x) \to \swcheese(y)$ is defined to be
  \[
    \swcheese(f)(\alpha)(p) = (D(f) \alpha)^k D(f)(p)
  \]
  where $\alpha \in \swcheese(x)$, $p \in D_i$, $i \in I_y$, and $[f|x](i) = (fx)^kf(i)$.  The isomorphism $f \colon I_y \sqcup J_x \to J_y \sqcup I_x$ induces the isomorphism $D(f) \colon D(I_y) \sqcup D(J_x) \to D(J_y) \sqcup D(I_x)$.
\end{definition}

In \cite{thomas-kontsevich-swiss-cheese}, $\schinf$ denotes $F_{\f1} W_{\h}^{\f1} \swcheese$ and $\sch$ denotes $W_\h \swcheese$.  In this special case corollary~\ref{corollary:FWhf1O-to-WhO-is-cofibration-of-Oph-operads} gives theorem~\ref{theorem:swcheese-generated-in-degree-0-1}, which is an essential technical result of that paper.   Informally, theorem~\ref{theorem:swcheese-generated-in-degree-0-1} combined with theorem~\ref{theorem:schinf-sch-weak-equivalence} means that the swiss cheese operad is generated in degrees 0 and 1.

\begin{theorem}
	\label{theorem:swcheese-generated-in-degree-0-1}
  The natural map $\schinf \to \sch$ is a cofibration in $\Op_\h$.
\end{theorem}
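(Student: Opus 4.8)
The plan is to obtain the theorem as the special case $\calO=\swcheese$ of Corollary~\ref{corollary:FWhf1O-to-WhO-is-cofibration-of-Oph-operads}. Indeed, by the discussion preceding the theorem we have $\schinf=F_{\f1}W_\h^{\f1}\swcheese$ and $\sch=W_\h\swcheese$, and the natural map $\schinf\to\sch$ is, by construction, the map $F_{\f1}W_\h^{\f1}\swcheese\to W_\h\swcheese$ appearing in that corollary. So there is essentially nothing to prove beyond checking that the corollary genuinely applies to $\swcheese$. The only non-formal ingredient in the corollary — and in Lemmas~\ref{lemma:iota-1-is-cofibration} and \ref{lemma:FWCO-to-WCO-cofibration-of-weight-k+1-operads} on which it rests, exactly as in Proposition~\ref{proposition:WO-cofibrant} and the pushout discussion around diagram~\ref{diagram:FWO-to-WO-pushout} — is the hypothesis that the underlying pointed collection $U_0\swcheese$ is cofibrant in $\Coll$. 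I would isolate this as the one thing to verify.

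To see that $U_0\swcheese$ is a cofibrant pointed collection I would argue through the description in Remark~\ref{remark:expound for0}: such a collection is a space $\swcheese(I;c)$ for each young tree $(I;c)$, an action of $\aut(I)$ on it, and a unit point $\ast\to\swcheese(c;c)$; with the projective model structure transported from $\topl$, cofibrancy amounts to (i) each $\swcheese(I;c)$ being a cofibrant $\aut(I)$-space and (ii) the unit point being a nondegenerate cofibration. Part (ii) is immediate, since $\swcheese(c;c)$ (affine contractions $D_c\to D_c$ of the allowed form) deformation retracts onto the identity embedding, which is a nondegenerate basepoint. Part (i) rests on the observation that the $\aut(I)$-action is \emph{free}: if $\sigma\in\aut(I)$ fixes $\alpha\in\swcheese(I;c)$, then $\alpha|_{D_i}$ and $\alpha|_{D_{\sigma(i)}}$ have the same image for every $i$, forcing $\sigma(i)=i$ because $\alpha$ is an embedding and hence has disjoint pieces. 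A free action of a finite group on a compact semialgebraic set — and $\swcheese(I;c)$ is exactly that, being carved out of $\reals^N$ by the polynomial (in)equalities of Definition~\ref{definition:swcheese-operad} — carries a finite free equivariant CW structure (triangulate the quotient, pull back), hence is cofibrant as an $\aut(I)$-space; when $J_x$ is not a singleton one takes the product over $J_x$, which preserves all of this. This is the same reasoning that makes the little discs operads cofibrant in Berger--Moerdijk~\cite{berger-moerdijk-w}.

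With $U_0\swcheese$ known to be a cofibrant pointed collection, Lemmas~\ref{lemma:iota-1-is-cofibration} and \ref{lemma:FWCO-to-WCO-cofibration-of-weight-k+1-operads} apply to $\swcheese$ verbatim, hence so does Corollary~\ref{corollary:FWhf1O-to-WhO-is-cofibration-of-Oph-operads}, giving that $\schinf\to\sch$ is a cofibration in $\Op_\h$. I expect the one genuinely delicate point to be the equivariant-cofibrancy step in (i): freeness of the symmetric-group action is elementary, but upgrading it to an honest free $\aut(I)$-CW structure requires invoking semialgebraic triangulation or the analogous input from Berger--Moerdijk's analysis of configuration-type spaces; all the remaining bookkeeping is already supplied by the earlier sections.
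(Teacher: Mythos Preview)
Your approach is correct and is exactly the paper's: the text immediately preceding the theorem says that in the special case $\calO=\swcheese$, Corollary~\ref{corollary:FWhf1O-to-WhO-is-cofibration-of-Oph-operads} gives Theorem~\ref{theorem:swcheese-generated-in-degree-0-1}, and no separate proof is supplied. Your additional verification that $U_0\swcheese$ is a cofibrant pointed collection is a detail the paper leaves implicit (the corollary is stated without hypothesis, though its ingredients do trace back to the cofibrancy assumption in the discussion around diagram~\ref{diagram:FWO-to-WO-pushout}), so you are simply being more careful than the paper rather than taking a different route.
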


\subsection{Weak equivalence proof}
\label{section:weak-equivalence}
 This section contains a proof of
 \begin{theorem}\label{theorem:schinf-sch-weak-equivalence}
  The natural map $\schinf \to \sch$ is a weak equivalence of operads in $\Op_\h$.
\end{theorem}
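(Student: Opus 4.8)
The plan is to reduce the claim to a statement about underlying collections and then to a geometric deformation-retraction argument. Both $\sch=W_\h\swcheese$ and $\schinf=F_{\f1}W_\h^{\f1}\swcheese$ are cofibrant objects of $\Op_\h$: the underlying collection $U_0\swcheese$ is cofibrant because $\aut(I)$ acts freely on each $\swcheese(I;c)$ (distinct discs of a configuration are never equal), so the $C_\h$- and $C_\h^{\f1}$-analogues of Proposition~\ref{proposition:WO-cofibrant} make $W_\h\swcheese$ and $W_\h^{\f1}\swcheese$ cofibrant, and $F_{\f1}$ is a left Quillen functor. The model structure on $\Op_\h$ is projective, transferred from the underlying $\h$-pointed collections, so it suffices to prove that $\schinf(x)\to\sch(x)$ is a weak equivalence of spaces for every $\h$-rooted young tree $x$; and since $\epsilon\colon\sch=W_\h\swcheese\to\swcheese$ is a weak equivalence (again the $C_\h$-analogue of Proposition~\ref{proposition:WO-cofibrant}), by two-out-of-three this is equivalent to asking that the composite augmentation $\schinf(x)\to\sch(x)\xrightarrow{\epsilon}\swcheese(x)$ be a weak equivalence.

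Next I would identify $\schinf$ with the bottom stage of the weight filtration on $\sch$. The adjunction identities established in the proof of Corollary~\ref{corollary:FWhf1O-to-WhO-is-cofibration-of-Oph-operads} give $W_\h^{\f1}\swcheese=F^{\f1,-1}_{\h,C}W^C_{-1}\swcheese$, $F^\omega_{\h,C}F^{-1}_\omega=F_{\f1}F^{\f1,-1}_{\h,C}$, and $\sch=F^\omega_{\h,C}W^C_\omega\swcheese=F^\omega_{\h,C}\colim_k F^k_\omega W^C_k\swcheese=\colim_{k\ge -1}F^k_{\h,C}W^C_k\swcheese$, so the map $\schinf\to\sch$ is exactly the inclusion of the $k=-1$ term into this filtered colimit. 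By Lemmas~\ref{lemma:iota-1-is-cofibration} and~\ref{lemma:FWCO-to-WCO-cofibration-of-weight-k+1-operads} each transition map $F^k_{\h,C}W^C_k\swcheese\to F^{k+1}_{\h,C}W^C_{k+1}\swcheese$ --- which is $F^{k+1}_{\h,C}$ applied to the cofibration $\iota_k^C\colon F^k_{k+1}W^C_k\swcheese\to W^C_{k+1}\swcheese$ --- is a cofibration in $\Op_\h$, so the colimit is a weak equivalence as soon as every transition map is one, and the theorem reduces to showing that $F^{k+1}_{\h,C}(\iota_k^C)$ is a weak equivalence of collections for each $k\ge -1$. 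One should note that $\iota_k^C$ is itself not a weak equivalence: on a young tree $y$ whose $\h$-vertex has weight exactly $k+1$ it is the map $\emptyset\to\swcheese(y)$, so the free functor $F^{k+1}_{\h,C}$ has to supply the missing homotopy type by re-expressing a weight-$(k+1)$ configuration as a composite of a strictly lower-weight configuration with little $d$-discs.

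The geometric heart is the analysis of $F^{k+1}_{\h,C}(\iota_k^C)$ through the $C$-variant of the pushout square of diagram~\ref{diagram:FWO-to-WO-pushout}. Applying the left adjoint $F^{k+1}_{\h,C}$ to that square again produces a pushout with the attaching maps still cofibrations, so $F^{k+1}_{\h,C}(\iota_k^C)$ is a cobase change of $F^{k+1}_{\h,C}$ applied to the coproduct over those $[g]\in\pi_0\for^C_{k+1/z}$ with exactly $k+1$ internal edges of the $\aut(g)$-equivariant inclusions $(W\times\swcheese)^+_k(g)\hookrightarrow W(g)\times\swcheese(\dom g)$; by left properness it is enough that this becomes a weak equivalence after $F^{k+1}_{\h,C}$. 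Made concrete, this is the assertion that --- once the operadic compositions and the collapse and grow maps of the $W$-construction are available --- a configuration of several full discs and some half discs inside a half disc admits a deformation retraction, $\aut(g)$-equivariant and compatible with sending internal edges to $0$ and to $\infty$, onto the subspace of \emph{reducible} configurations, those in which the full discs incident to the top $\h$-vertex are clustered inside a common little $d$-disc and so split off as a little-discs composite of a configuration of strictly smaller weight. Such a retraction can be built from a radial rescaling-and-clustering flow on the disc centres, of the same flavour as the flow underlying the proof that $\epsilon\colon W\calO\to\calO$ is a weak equivalence.

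The main obstacle is precisely constructing that clustering deformation retraction so that it is simultaneously continuous and natural in $g$ and $z$, $\aut(g)$-equivariant, compatible with both structure maps $W_\Sigma$ and $W_\infty$ so that it descends through the coend defining the $W$-construction and through the free functor $F^{k+1}_{\h,C}$, and uniform in $k$ so the stagewise equivalences assemble. I expect the cleanest way to meet all of these at once is to avoid the stage-by-stage bookkeeping and build a homotopy inverse $\sigma\colon\swcheese(x)\to\schinf(x)$ directly, via a Fulton--MacPherson--Kontsevich style canonical decomposition of a swiss cheese configuration: organise the full discs into a nested family of screens, read off a tree in $C_\h^{\f1}$ together with edge lengths in $[0,\infty]$ and decorations by half-disc operations with at most one full input, and verify that the result is a section of the augmentation $\schinf(x)\to\swcheese(x)$ up to a canonical contraction, the argument then running in parallel with the proof of Proposition~\ref{proposition:WO-cofibrant}. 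With the screen flow fixed, the remaining automorphism- and unit-edge bookkeeping is routine.
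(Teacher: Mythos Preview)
Your route is genuinely different from the paper's. The paper does not touch the weight filtration at all in the weak-equivalence argument: it argues by induction on the number $n$ of full discs in the target tree. For each $(n,m)$ it passes to an auxiliary four-coloured operad $\swcheesebullet$ (adding colours for a ``collapsed full disc'' and ``collapsed half disc''), uses the forget-the-last-full-disc maps $p_1\colon F(\schonebullet)(1,0|n,m)\to \schinf(n,m)$ and $p\colon \schbullet(1,0|n,m)\to \sch(n,m)$, and shows that for every $\alpha$ the map of fibres $p_1^{-1}(\alpha)\to p^{-1}(\iota\alpha)$ is a weak equivalence. Both fibres are computed explicitly as coends over small, hand-drawn categories $\trees_{\alpha,1}\subset\trees_\alpha$, those coends are identified with homotopy colimits, and each is shown to be homotopy equivalent to $(S^{d-1})^{\vee n}$, essentially the geometric realisation $\abs{\swcheese(T_\alpha)\tilde\alpha}$ (respectively its $\h$-coloured boundary). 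The long exact sequence of the fibrations finishes the induction. So the paper's argument is pointwise-geometric rather than filtration-theoretic, and the swiss-cheese geometry enters through the identification of the fibres with wedges of spheres, not through a clustering flow.

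Your framework is sound up to the point where you try to use diagram~\ref{diagram:FWO-to-WO-pushout}. That square is, for each fixed $z$, a pushout of \emph{spaces}; only the right-hand column assembles to a map in $\Op^C_{k+1}$. So it does not make sense to ``apply $F^{k+1}_{\h,C}$ to the square'' and then invoke left properness on the image of the left-hand column: there is no left-hand column in $\Op^C_{k+1}$ to push forward. Even if one reinterprets the statement as a pushout of collections, the left vertical map $(W\times\swcheese)^+_k(g)\hookrightarrow W(g)\times\swcheese(\dom g)$ is the inclusion of a boundary and is never a weak equivalence, so left properness alone gives nothing; the equivalence you want has to be produced by the free functor interacting with the operad structure, and that is exactly the content you defer to the ``clustering deformation retraction'' or the Fulton--MacPherson screen flow. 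In other words, the reduction you state does not reduce anything: the entire theorem is hiding in the sentence ``such a retraction can be built from a radial rescaling-and-clustering flow''. That sentence may well be true, but making it precise, $\aut(g)$-equivariant, and compatible with both $W_\Sigma$ and $W_\infty$ so that it descends through all the coends is substantial work, not ``routine bookkeeping''. The paper sidesteps this entirely by computing two explicit homotopy colimits and matching them.
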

 The idea of the proof is to consider the maps
$p_1 \colon \schinf(n,m) \to \schinf(n-1,m)$ and $p \colon\sch(n,m) \to
\sch(n-1,m)$ given by forgetting the $n^{th}$ disc.  By induction, we can
suppose $\schinf(n-1,m) \to \sch(n-1,m)$ is a weak equivalence.  We continue the induction by showing  that $p_1\inv(\alpha) \to p\inv(\alpha)$ is a weak equivalence for
every $\alpha \in \schinf(n-1,m)$.

To make the computation of $p_1\inv(\alpha)$ and $p\inv(\alpha)$ accessible, we will collapse the $n^{th}$ disc of $\alpha \in \sch(n,m)$ to a point.  Our goal in the next section is to make this precise.

\subsubsection{Defining \texorpdfstring{$\swcheesebullet(k,l|n,m)$}{ SCbullet(k,l|n,m)}}

When we collapse the $n^{th}$ disc of $\alpha \in \swcheese^\h(n,m)$ to its center, we think of the result $\hat{\alpha}$ as living in a four-colored operad which we denote by $\swcheesebullet$.  We add the colors $\e_\smallbullet$ and $\h_\smallbullet$.  Let $K_\smallbullet = \{ \e_\smallbullet, \h_\smallbullet, \e, \h\}$ be the set of colors for this new operad.  The color $\e_\smallbullet$ stands for collapsed full disc.  It is convenient to also allow a collapsed half disc, which we color with $\h_\smallbullet$.  Let $(k,l|n,m)$ denote the $K_\smallbullet$-colored finite set with $(k,l,n,m)$ elements of color $(\e_\smallbullet, \h_\smallbullet, \e, \h)$.  Let $\rfor^{ K_\smallbullet}_{\h}$ denote the full sub category of $\rfor^{K_{\smallbullet}}$ with objects isomorphic to disjoint unions of the young forests
\begin{equation}\label{equation:smallbullet-connected-young-forests}
  \hspace{-0.9em}(0,0|n,m) \to \{\h\} \quad (1,0|n,m) \to \{\h\} \quad (0,1|n,m) \to \{\h\} \quad (1,0|0,0) \to \{\h_\smallbullet\}
\end{equation}
To define $\swcheesebullet\colon \rfor^{ K_\smallbullet}_{\h} \to \topl$ we need the notion of the geometric realization of $\beta\in \swcheese^\h(n, m)$.
\begin{definition}
\label{definition:geometric-realization}
 Given $\beta \in \swcheese^\h(n,m)$, let $\abs{\beta}$ be its \emph{geometric realization}.  This is the subset of $\mathbb R^d$ given by deleting the open discs and half-discs of $\beta$ from the closed unit half-disc.  More precisely, if $\bar{D}^d_+$ is the closed unit half-disc in $\mathbb R^d$, $\{(D^d_\e)_j\}_{j=1}^m$ are the open discs of $\beta$, and  $\{(D^d_\h)_i\}_{i=1}^{n}$ are the open half-discs of $\beta$ considered as open discs in $\mathbb R^d$ whose center lies in $\mathbb R^{d-1}$, then
\[
 \abs{\beta} = \bar D^d_+ - \left(\left(\bigcup_{i=1}^{n} (D^d_\h)_i \right) \cup \left(\bigcup_{\smash{j}=1}^m (D^d_\e)_j \right) \right).
\]
Let $\partial_\h \!\abs{\beta} \coloneqq \partial(\bar D^d_+ - (\cup_i (D^d_\h)_i))$ be the \emph{$\h$-colored boundary} of $\abs{\beta}$.  Let $\partial_{\rt}(\abs{beta})$ be the upper hemisphere $S^{d-1}_+ \subset \partial \bar D^d_+$ and let $\partial_{i}\abs{\beta}$ be the upper hemisphere of $\partial(D^d_\h)_i$ for $1 \leq i \leq n$.
\end{definition}
Now we can set
\begin{align*}
\swcheesebullet^\h(0,0|n,m) &= \swcheese^\h(n,m) \\
\swcheesebullet^\h(1,0|n,m) &= \{(\alpha, q) \mid \alpha \in \swcheese^\h(n,m), q \in \abs{\alpha}\}  \\
\swcheesebullet^\h(0,1|n,m) &=\{ (\alpha, q) \mid \alpha \in \swcheese^\h(n,m), q \in \abs{\alpha}\cap \mathbb R^{d-1} \}\\
\swcheesebullet^{\h_\smallbullet}(k,l|n,m) &= \ast
\end{align*}
We think of the point $q \in \abs{\alpha}$ as a collapsed disc and the point $q \in \abs{\alpha}\cap \mathbb R^{d-1}$ as a collapsed half-disc.
Composition in $\swcheesebullet$ takes place in the half-discs and collapsed half-discs only. The discs play no part in composition.  However the collapsed half-discs and collapsed discs only play a part in composition when we plug a collapsed disc into a collapsed half-disc.  The result is a collapsed disc which happens to live on the boundary of the geometric realization.

\begin{figure}[ht]
\label{swcheesehat-def}
 \centering
 \includegraphics[width=\textwidth]{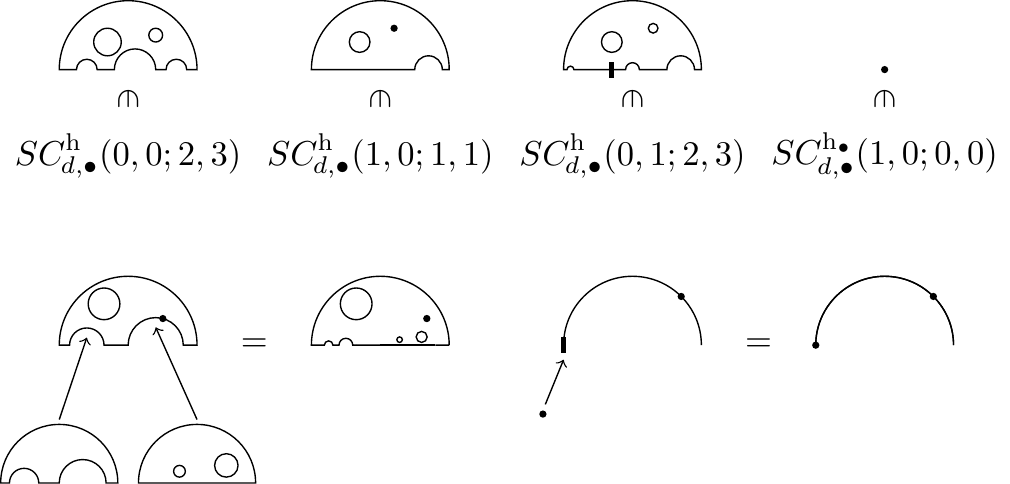}
\caption[The 3-colored operad $\swcheese^\h$]{The collapsed discs are denoted by dots and the collapsed half-discs by tick marks.  Collapsed discs are color $\e_\smallbullet$ input edges and collapsed half-discs are color $\h_\smallbullet$ input edges.  To keep the collapsed discs and half-discs from coinciding, we only allow one or the other in any composition.  Composition in $\swcheesebullet$ takes place only in the half-discs and collapsed half discs.  The only composition we can do in a collapsed half-disc is given by plugging in a collapsed disc.  The result is a collapsed disc replacing the collapsed half-disc.}
\end{figure}
\begin{definition}\label{definition:e-leq-1-forest-categories}
Let $\rfor^{K_\smallbullet}_{\e\leq 1;\h}$ denote the full sub category of $\rfor^{K_\smallbullet}_\h$ given by disjoint unions of the young forests from \ref{equation:smallbullet-connected-young-forests} with $n \leq 1$. In the same way, let $\rfor^{K}_{\e\leq 1;\h}$ denote the full sub category of $\rfor^K_\h$ given by disjoint unions of forests from \ref{equation:smallbullet-connected-young-forests} with $k = l =0$ and $n \leq 1$.   We write $x \leq 1$ if $x$ is a $K_\smallbullet$-colored young forest in $\rfor^{K_\smallbullet}_{\e\leq 1;\h}$ and we alsow write $x \leq 1$ if $x$ is a $K$-colored young forest in $\rfor^{K}_{\e\leq 1;\h}$.  Let $\swcheesebullet^{\h 1}$ denote the restriction of $\swcheesebullet$ to $\rfor^{K_\smallbullet}_{\e\leq 1;\h}.$
\end{definition}

Let $\schbullet$ and $\schonebullet$ denote the $\W$ construction applied to the four-colored operads $\swcheesebullet$ and $\swcheesebullet^{\h 1}$.  Let $F$ denote Kan extension along $\rfor^{K_\smallbullet}_{\e\leq 1;\h} \to \rfor^{K_\smallbullet}_{\h}$.  Consider the commutative diagram of topological spaces where the horizontal arrows do not assemble to operad maps,
\[
\begin{tikzpicture}
  \matrix (m) [matrix of math nodes, row sep={2ex}, column sep={3ex}, text height=1.5ex, text depth=0.25ex]
{ \schinf(n+1, m)  &   F(\schonebullet)(1,0| n, m) & \schinf(n, m)\\
 \sch(n+1, m) & \schbullet(1,0| n, m) & \sch(n, m).
 \\};
\path[->]
 (m-1-1) edge (m-2-1) edge node[above, map name] {$\sim$} (m-1-2)
 (m-1-2) edge (m-2-2) edge node[above, map name] {$p_1$} (m-1-3)
 (m-2-1) edge node[above, map name] {$\sim$} (m-2-2)
 (m-2-2) edge node[above, map name] {$p$} (m-2-3)
 (m-1-3) edge node[right, map name] {$\iota$} (m-2-3);
 \end{tikzpicture}
\]
The maps $p_1$ and $p$ delete the collapsed disc and, if necessary, a left over collapsed half-disc.
By induction on $n$ we assume the right vertical arrow is an equivalence. We will show that for each $\alpha\in \schinf(n, m)$ the inclusion $p_1^{-1}(\alpha)\rightarrow p^{-1}(\iota\alpha)$ is an equivalence. Then by the long exact sequence of homotopy groups we conclude that the middle vertical arrow is an equivalence. The top left and bottom right maps collapse the $n^{th}$ full disc.  One can show that these are equivalences. We conclude that the left vertical arrow is also an equivalence.  This will prove theorem~\ref{theorem:schinf-sch-weak-equivalence}.

\subsubsection{Computing \texorpdfstring{$p\inv(\iota\alpha)$ and $p_1\inv(\alpha)$}{p-1(iota alpha) and p1-1(alpha)}.}

We have shown that the proof rests on the following proposition~\ref{proposition:fiber-equivalence}.  This section is dedicated to the proof of this proposition.
\begin{proposition}
\label{proposition:fiber-equivalence}
 Fix $\alpha \in \schinf(n,m)$.  The inclusion of the fiber $p_1\inv(\alpha)$ into the fiber $p\inv(\iota \alpha)$ is a weak equivalence.
\end{proposition}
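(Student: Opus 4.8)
The plan is to make both fibers completely explicit, as spaces assembled by gluing the geometric realizations $\abs{\beta_v}$ of Definition~\ref{definition:geometric-realization} along their boundary strata $\partial_\rt$, $\partial_i$, $\partial_\h$, and then to collapse the one extra coordinate that the larger fiber carries.

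First I would unwind the $\W$-construction on the bullet operads. A point of $\schbullet(1,0|n,m)$ lying over $\iota\alpha$ is represented by a forest $g$ over the young tree $(1,0|n,m)\to\{\h\}$ with edge lengths in $[0,\infty]$ and a $\swcheesebullet$-labelling whose \emph{collapsed-disc-free part} (delete the collapsed disc and, if present, the leftover collapsed half-disc) represents $\iota\alpha$ in $\sch(n,m)$. Since composition in $\swcheesebullet$ happens only inside half-discs and collapsed half-discs, and a collapsed disc may only be plugged into a collapsed half-disc, the collapsed-disc input attaches to $g$ in exactly one of two ways: (A) it is an input of a half-disc vertex $v$, whose label is then a pair $(\beta_v,q_v)$ with $q_v\in\abs{\beta_v}$; or (B) it is the input of a collapsed-half-disc vertex $w$ joined to a half-disc vertex $v$ by an internal edge of length $\ell\in[0,\infty]$, the label of $v$ being a pair $(\beta_v,q_v)$ with $q_v\in\abs{\beta_v}\cap\reals^{d-1}$. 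Comparing the coend relations (collapse of length-$0$ edges, deletion of unit vertices) with the boundary decomposition of Definition~\ref{definition:geometric-realization}, I would then check: that pushing $q_v$ across $\partial_i\abs{\beta_v}$ matches moving the collapsed disc into the sub-tree over the $i$-th sub-half-disc; that pushing $q_v$ toward $\partial_\rt\abs{\beta_v}$ matches pushing it down into the parent vertex; that in case (B) the end $\ell=0$ absorbs the collapsed half-disc and glues that stratum onto case (A) along $\partial_\h\abs{\beta_v}$, while $\ell=\infty$ is a free boundary; and that every representative of $\iota\alpha$ yields the same glued space and nothing more. The outcome is a homeomorphism of $p\inv(\iota\alpha)$ with the geometric realization $\abs{\iota\alpha}$ of the $\W$-point $\iota\alpha$ — the spaces $\abs{\beta_v}$ over a representative, glued along $\partial_\rt$ and $\partial_i$, which is the case-(A) locus — together with a collar $\partial_\h\abs{\iota\alpha}\times[0,\infty]$ attached along $\partial_\h\abs{\iota\alpha}\times\{0\}$, which is the case-(B) locus. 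Running the same analysis inside $\rfor^{K_\smallbullet}_{\e\leq 1;\h}$ and through the Kan extension $F$ gives the analogous description of $p_1\inv(\alpha)$ with $\abs{\alpha}$ in place of $\abs{\iota\alpha}$, and identifies $p_1\inv(\alpha)\hookrightarrow p\inv(\iota\alpha)$ with the map induced by $\iota$.

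Second, I would observe that $\iota$ does not change the disc configuration: it only forgets the constraint that each vertex carries at most one full disc and flattens the free-operad grafting. A maximally composed representative of $\alpha$ in the $\e\le1$ setting and a maximally composed representative of $\iota\alpha$ differ only by collapsing length-$0$ edges at vertices that would then carry two or more full discs, and such a collapse is precisely the \emph{opening up of a slot}, which identifies the two glued spaces. Hence $\abs{\alpha}=\abs{\iota\alpha}$, the induced map on realizations is the identity, and the two collars agree. Finally, pushing the $[0,\infty]$-collar onto its $\{0\}$-end is a deformation retraction of each fiber onto its case-(A) locus — it only changes the length of the collapsed-half-disc edge, which is not part of the collapsed-disc-free data, so it stays inside the fiber, and it fixes case (A) pointwise — and these retractions are compatible with the inclusion. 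Thus, up to them, $p_1\inv(\alpha)\hookrightarrow p\inv(\iota\alpha)$ is the identity of $\abs{\alpha}=\abs{\iota\alpha}$, so it is a weak equivalence. (It is in fact a homotopy equivalence; one could alternatively deduce the weak statement just from the collar being attached along a cofibration, avoiding any point-set care with the retraction.)

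The main obstacle is the first step. The $\W$-construction exhibits each fiber as a quotient of a large coproduct over all forests $g$, and one must verify that this quotient is exactly the gluing described above: that the representatives of $\iota\alpha$ using $\swcheese(n_v,m_v)$-operations with $n_v\ge 2$ — which do not come from the $\e\le1$ setting — contribute no collapsed-disc positions beyond those already visible after gluing the $\abs{\beta_v}$; that length-$\infty$ edges internal to a representative of $\iota\alpha$ are harmless, merely subdividing $\abs{\iota\alpha}$; and that the transition from case (B) at $\ell=0$ to case (A) is consistent with the four-colour combinatorics of $\swcheesebullet$ and with the strata $\partial_\h,\partial_\rt,\partial_i$. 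A secondary point is checking that the Kan extension $F$ on the source side introduces no collapsed-disc position beyond those in $\schonebullet$, so that $p_1\inv(\alpha)$ really is the full $\abs{\alpha}$-plus-collar and the inclusion is surjective on every stratum.
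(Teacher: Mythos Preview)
Your outline is sound and reaches the right conclusion, but it follows a genuinely different route from the paper's proof.  The paper encodes your case analysis (A)/(B) as the small category $\trees_\alpha$ of Definition~\ref{definition:trees-alpha}: the objects $S_{j,0},S_{i,0}$ are your case (A), the objects $S_{j,1},S_{i,1}$ are your case (B), and the relations you describe (crossing $\partial_i$, crossing $\partial_\rt$, the $\ell=0$ transition) are exactly the morphisms of $\trees_\alpha$.  Lemma~\ref{lemma:p-inv-alpha-coend} and Lemma~\ref{lemma:W-alpha-hocolim} then recast $p^{-1}(\iota\alpha)$ as $\hocolim_{\trees_\alpha}\swisscheese_\alpha$, and Lemma~\ref{lemma:p-inv-alpha-is} collapses it to $\abs{\swcheese(T_\alpha)\tilde\alpha}\simeq(S^{d-1})^{\vee n}$.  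For $p_1^{-1}(\alpha)$ the paper does \emph{not} attempt a uniform ``realization plus collar'' description.  Instead it first reduces to the pieces $\alpha(j)$ coming from the free tree $f\colon y\to(n,m)$, invokes Lemma~\ref{lemma:p1-inv-alpha-as-a-coend} and Corollary~\ref{corollary:p1-inv-alpha-degree-1-is} to compute each $p_1^{-1}(\alpha(j))$ with $n_j=1$ as $\partial_\h\abs{\,\cdot\,}\simeq S^{d-1}$, and then glues the pieces along the internal edges of $f$ to recover $(S^{d-1})^{\vee n}$.

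What each approach buys: your argument addresses the \emph{inclusion map} directly --- after the collar retraction it becomes the identity on $\abs{\alpha}=\abs{\iota\alpha}$ --- whereas the paper computes the two homotopy types separately and matches them.  On the other hand, the paper's categorical packaging makes the verification you flag as ``the main obstacle'' completely explicit: the coend over $\trees_\alpha$ \emph{is} the quotient you are worried about, and Lemmas~\ref{lemma:p-inv-alpha-coend}--\ref{lemma:p1-inv-alpha-as-a-coend} are precisely the checks that nothing extra appears.  One caution on your side: the proof of Lemma~\ref{lemma:p1-inv-alpha-as-a-coend} reads the constraint $y\leq 1$ as ``at most one input of colour in $\{\e,\e_\smallbullet\}$ per component'', which would forbid placing the collapsed disc directly at a vertex that already carries a full disc; under that reading your case-(A) locus for $p_1^{-1}(\alpha)$ is strictly smaller than $\abs{\alpha}$ and your second step does not go through as stated.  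Under the literal reading of Definition~\ref{definition:e-leq-1-forest-categories} (only $n\leq 1$ is constrained) your description is correct.  Either way your retraction step and the identification $\abs{\alpha}=\abs{\iota\alpha}$ are fine, but you should be explicit about which convention you are using and, if it is the paper's, adjust the description of the source fiber accordingly.
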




\begin{definition}\label{definition:three-p's}
Define $p \colon \rfor^{K_\smallbullet}_\h \to \rfor^K_\h$ by sending the $K_\smallbullet$-colored young forest $x \colon I_x \to J_x$ to the $K$-colored forest $px$ with
\[
	I_{px} = I_x\smallsetminus(I_x)_{\e_\smallbullet, \h_\smallbullet} \quad J_{px} = J_x \smallsetminus (J_x)_{\e_\smallbullet, \h_\smallbullet},
\]
where $I_{\e_\smallbullet, \h_\smallbullet}$ is the $\e_\smallbullet$ and $\h_\smallbullet$-colored portion of the $K_\smallbullet$-colored set $I$.  In \ref{equation:smallbullet-connected-young-forests} we see that we must have $x(I_{px})\subset J_{px}$ so that we can define $px$ as the restriction of $x$ to $I_{px}$.  Observe that $p(1,0|n,m) = (n,m)$.  If $f\colon y \to x$ is a forest, then $pf \colon py \to px$ is defined using $f$.  Since $f$ preserves the colorings $pf$ is indeed a forest from $py$ to $px$.  If $f$ is in $\rfor^{K_\smallbullet}_{\e\leq 1;\h}$, then $pf$ is a morphism in $\rfor^K_{\e\leq 1; \h}$.

If $\beta \in \swcheesebullet(z)$ for a $K_\smallbullet$-colored young forest $z$, then we get $p\beta \in \swcheese(pz)$.  To define $p\beta$ write $\beta = (\beta_j)_{j\in J_z}$ where $\beta_j \in \swcheesebullet(z\inv(j))$.  Each $\beta_j$ is of the form $(\gamma_j, q_j)$ with $q_j \in \abs{\gamma_j}$ or of the form $\gamma_j \in \swcheese(z\inv(j))$.  Set $p\beta = (\gamma_j)_{j \in J_{pz}}$.

If $t \in W(f)$ and $f \in \rfor^{K_\smallbullet}_\h$, then $E(pf) \subset E(f)$ and $pt \in W(pf)$ is defined to be the pullback of $t \colon E(f) \to [0,\infty]$.
\end{definition}

Combining the colimits defining the $W$ construction and the left adjoint $ \Operads(\Collleqone) \to \Operads(\Coll)$ we get
\[
	\schinf(n,m) = \Bigg(\coprod_{\substack{g \colon z\to y\\ f \colon y \to (n,m)}}W(g) \times \swcheese(z) \Bigg)\modsim,
\]
where $y \leq 1$ (definition~\ref{definition:e-leq-1-forest-categories}) and the same relations hold as in equation~\ref{equation:FkWO(z)}.  If $\alpha \in \schinf(n,m)$ is represented by $(f,g, t, \tilde\alpha)$ where $f \colon y \to (n,m)$, $g \colon z_\alpha \to y$, $t\in W(g)$, and $\tilde\alpha \in \swcheese(z)$, then $\iota \alpha \in \sch(n,m)$ is represented by $(fg, W_\infty(f)t,\tilde\alpha)$.  Let $T_\alpha = fg\colon z_\alpha \to (n,m)$ and $t_\alpha=W_\infty(f)t$.  Without loss of generality, we may assume $t_\alpha(i) > 0$ for every $i \in E(T_\alpha)$ and that $\tilde\alpha(j) \neq \id_{\swcheese}$ for any $j \in J_{z_\alpha}$.

\begin{definition}\label{definition:W-alpha}
Let $\trees(1,0|n,m)$ denote the over category $\rfor^{K_\smallbullet}_{\h,/(1,0|n,m)}$ 
Let $\trees(n,m) = \rfor^K_{\h, /(n,m)}$.  Let $p \colon \trees(1,0|n,m) \to \trees(n,m)$ denote the functor induced by $p$ from definition~\ref{definition:three-p's}.

 Note that $T_\alpha \in \trees(n,m)$.  Let $(S, \nu) \in \trees(1,0|n,m)_{/T_\alpha}$ where $S\colon x \to (1,0|n,m)$ is any $K_\smallbullet$-colored tree and $\nu \colon z_\alpha \to px$ is a forest such that $(pS)\nu = T_\alpha$. Define functors $W_\alpha \colon \trees(1,0|n,m)_{/T_\alpha}^{op} \to \topl$ and $\swisscheese_\alpha \colon \trees(1,0|n,m)_{/T_\alpha} \to \topl$ via the pullbacks
\begin{equation}\label{diagram:swisscheesealpha-and-Walpha}
    \begin{tikzpicture}[numbered picture]
      \matrix (m) [matrix of math nodes, row sep=2ex, column sep=1em, text height=1.5ex, text depth=0.5ex]
      {
        \swisscheese_\alpha(S) &             & \swcheesebullet(x) & & W_\alpha(S) & W(S)  & W(pS) \\
        \ast                   &|[xshift=-3em]|\swcheese(z) & \swcheese(px)      & & \ast        &       & W(T_\alpha).
      \\};
      \foreach \source/\target in {1-1/1-3, 1-1/2-1, 2-1/2-2, 2-2/2-3, 1-3/2-3, 1-5/1-6, 1-5/2-5, 2-5/2-7, 1-6/1-7, 1-7/2-7}
      {\path[->] (m-\source) edge (m-\target);}
      \foreach \source/\target/\pos/\maplabel in {1-3/2-3/right/p, 2-1/2-2/above/\tilde\alpha, 2-2/2-3/above/\swcheese(\nu), 1-6/1-7/above/p, 1-7/2-7/right/W_\Sigma(\nu), 2-5/2-7/above/t_\alpha}
      {\path (m-\source) edge node[map name, \pos] {$\maplabel$}
      (m-\target);}
    \end{tikzpicture}
  \end{equation}
\end{definition}

We want to replace $\trees(1,0|n,m)$ by a much smaller category.  First we need the wedge operation on forests.
\begin{definition}
  \label{definition:wedging-forests}
  Let $f \colon x \to y$ be a $K_f$-colored forest and let $g \colon z \to w$ be an $K_g$-colored forest for some finite sets $K_f, K_g$.  Let $\tau \colon J_w \to J_x$ be any map.  Define $x \vee_\tau z$ to be the young $K_f \sqcup K_g$-colored forest $(x,\tau, z) \colon I_x \sqcup J_w \sqcup I_z \to J_x \sqcup J_z$ and define $y \vee_\tau w$ to be the young forest $(y,(xf)^\infty \tau w) \colon I_y \sqcup I_w \to J_y$.  Finally, set $f\vee_\tau g \colon x \vee_\tau z \to y \vee_\tau w$ to be the forest $(f,g,f,\tau)\colon I_y \sqcup I_w \sqcup J_x \sqcup J_z \to I_x \sqcup J_w \sqcup I_z \sqcup J_y$.
\end{definition}

\begin{definition}
  \label{definition:trees-alpha}
  Let $\Gamma_0$ be the tree with with no internal vertices and a single input vertex of color $\e_\smallbullet$.  Let $\Gamma_1$ be the tree with a single internal vertex of color $\h_\smallbullet$ and a single input vertex of color $\e_\smallbullet$.

  For any edge $i \in \edges(T_\alpha)_\h$ define $\nu(i) \colon T_\alpha \to T_\alpha(i)$ to be the morphism in $\trees(n,m)$ which inserts a unary vertex along $i$. Call this new vertex $i_v$.   Let $S_{i,k} = T_\alpha(i) \vee_{i_v} \Gamma_k$.  For any internal vertex $j \in J_{z_\alpha}$ let $S_{j,k} = T_\alpha \vee_{j} \Gamma_k$.  Note that $pS_{i,k} = T_\alpha(i)$ and $pS_{j,k} = T_\alpha$.

  Let $\trees_\alpha$ be the full subcategory of $\trees(1,0|n,m)_{/T_\alpha}$ given by the objects $S_{i,k}=(S_{i,k},\nu(i))$ and $S_{j,k}=(S_{j,k},\id_{T_\alpha})$ where $i \in (I_{z_\alpha})_\h\sqcup\{ \rt \}$, $j \in J_{z_\alpha}$ and $k \in \{ 0,1 \}$.
\end{definition}

\begin{remark}
The advantage of $T_\alpha$ is that it is easy to understand and computes the space $p\inv\alpha$ (lemma~\ref{lemma:p-inv-alpha-coend}).  There is a unique morphism $S_{\ell, 1}$ to $S_{\ell, 0}$ for every $\ell$ and unique morphisms $S_{i,k} \to S_{T_\alpha\inv(i),k}$ and $S_{i,k} \to S_{z_\alpha(i),k}$.   See figure \ref{figure:trees-over-Talpha} for an illustration.
\end{remark}

\begin{lemma}
\label{lemma:p-inv-alpha-coend}
 The fiber $p\inv(\alpha)$ is given by the coend
\[
 W_\alpha \otimes_{\trees_{\alpha}} \swisscheese_\alpha.
\]
\end{lemma}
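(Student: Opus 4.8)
The plan is to compute the fiber $p\inv(\iota\alpha)\subseteq\schbullet(1,0|n,m)$ in two stages: first realise it as a coend over the whole comma category $\trees(1,0|n,m)_{/T_\alpha}$, and then replace that category by the finite subcategory $\trees_\alpha$. By Definition~\ref{definition:WvarO} and formula~\ref{equation:WO(z)-as-coend}, $\schbullet(1,0|n,m)$ is the quotient of $\coprod_{S\in\trees(1,0|n,m)}W(S)\times\swcheesebullet(x_S)$ by the $W$-construction relations, and $p$ is induced levelwise by the functor $p$ of Definition~\ref{definition:three-p's} together with the maps $W(S)\to W(pS)$ and $\swcheesebullet(x_S)\to\swcheese(px_S)$. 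Because $p$ carries each edge-collapse or vertex-deletion to a collapse, a deletion, or the identity, it is constant on equivalence classes, so $p\inv(\iota\alpha)$ is the quotient, by the induced relations, of the preimage of $\iota\alpha$ in the coproduct.

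First I would identify that preimage. A triple $(S,t,\beta)$ lies over $\iota\alpha$ exactly when $(pS,pt,p\beta)$ is $W$-construction equivalent to $(T_\alpha,t_\alpha,\tilde\alpha)$. Since $t_\alpha$ is nowhere $0$ and no vertex of $z_\alpha$ carries $\id_{\swcheese}$, the triple $(T_\alpha,t_\alpha,\tilde\alpha)$ is in normal form (no $0$-edges to contract, no identity unary vertices to delete), and the normal-form description of points of the $W$ construction should give that such an equivalence is witnessed by a single morphism $\nu\colon T_\alpha\to pS$ of $\trees(n,m)$ — that is, a forest $\nu\colon z_\alpha\to px_S$ with $(pS)\nu=T_\alpha$ — satisfying $W_\Sigma(\nu)(pt)=t_\alpha$ and $\swcheese(\nu)(\tilde\alpha)=p\beta$, and moreover that this witness is unique. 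By the pullback squares of diagram~\ref{diagram:swisscheesealpha-and-Walpha} this says precisely $t\in W_\alpha(S,\nu)$ and $\beta\in\swisscheese_\alpha(S,\nu)$, so the preimage is covered by $\coprod_{(S,\nu)}W_\alpha(S,\nu)\times\swisscheese_\alpha(S,\nu)$. Using that $p$ commutes with collapses and deletions and that the witness is unique, any two representatives of one point of the fiber, together with their witnesses, are connected by a zig-zag of morphisms of $\trees(1,0|n,m)_{/T_\alpha}$; this produces a natural homeomorphism $p\inv(\iota\alpha)\cong W_\alpha\otimes_{\trees(1,0|n,m)_{/T_\alpha}}\swisscheese_\alpha$.

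The second stage is to show that the inclusion $\trees_\alpha\hookrightarrow\trees(1,0|n,m)_{/T_\alpha}$ induces an isomorphism of coends, i.e. that $\trees_\alpha$ is cofinal for the pair $(W_\alpha,\swisscheese_\alpha)$; this I expect to be the main obstacle. The key is to classify the objects $(S,\nu)$ on which $W_\alpha(S,\nu)\times\swisscheese_\alpha(S,\nu)$ is nonempty. Since $t_\alpha$ is nowhere $0$, the equation $W_\Sigma(\nu)(pt)=t_\alpha$ forbids $\nu$ from collapsing an internal edge, so $pS$ is either $T_\alpha$ or $T_\alpha$ with one unary vertex inserted, $pS=T_\alpha(i)$; and since each connected component of $S$ is one of the young forests in~\ref{equation:smallbullet-connected-young-forests}, $S$ is obtained from $pS$ by grafting a single $\e_\smallbullet$-input, either directly (the tree $\Gamma_0$, a collapsed disc in the bulk of the geometric realisation) or through one $\h_\smallbullet$-vertex (the tree $\Gamma_1$, a collapsed disc on the $\h$-coloured boundary). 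Hence every such $(S,\nu)$ refines exactly one of the objects $S_{i,k}$ (collapsed disc on an $\h$-edge or at the root of $T_\alpha$) or $S_{j,k}$ (collapsed disc inside the half-disc of an internal vertex $j$ of $z_\alpha$) of $\trees_\alpha$, and the unique morphisms $S_{\ell,1}\to S_{\ell,0}$, $S_{i,k}\to S_{T_\alpha\inv(i),k}$, $S_{i,k}\to S_{z_\alpha(i),k}$ recorded after Definition~\ref{definition:trees-alpha} organise the supporting objects into the connected components of $\trees_\alpha$ and supply the needed cofinality; it remains to check that passing to $\trees_\alpha$ creates no new identifications. Combining the two stages gives $p\inv(\iota\alpha)\cong W_\alpha\otimes_{\trees_\alpha}\swisscheese_\alpha$.

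The two places where care is needed are making the normal-form reduction and the uniqueness of the witness $\nu$ precise for this particular variant of the $W$ construction — with its $W_\infty$-labels arising from the free-operad left adjoints — and verifying the combinatorial classification, especially that the constraints $W_\Sigma(\nu)(pt)=t_\alpha$ and $p\beta=\swcheese(\nu)(\tilde\alpha)$ really do confine the supporting objects $(S,\nu)$ to refinements of the finitely many $S_{\ell,k}$.
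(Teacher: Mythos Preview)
Your two-stage plan is sound in outline, but it is more elaborate than necessary and the second stage, as written, has a gap. The paper does not pass through the large coend over all of $\trees(1,0|n,m)_{/T_\alpha}$ at all: it first puts the \emph{fiber point} $\gamma$, not just the base point $\alpha$, into normal form. That is, one chooses a representative $(S,s,\tilde\gamma)$ of $\gamma$ with $s(\epsilon)>0$ for every internal edge and $\tilde\gamma(j)\neq\id$ for every internal vertex. With this done, the witness $\nu\colon T_\alpha\to pS$ exists and is unique exactly as you say, but now one gets much more: the non-identity condition on $\tilde\gamma$ forces $p\tilde\gamma(j)\neq\id$ at every vertex of $pS$ except possibly the unique one carrying the $\e_\smallbullet$ or $\h_\smallbullet$ input, so $\nu$ can insert \emph{at most one} unary vertex, and $S$ is already literally one of the $S_{j,k}$ or $S_{i,k}$. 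No cofinality argument is needed; the map to the small coend is defined directly on normal-form representatives.

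The gap in your version is the sentence ``$pS$ is either $T_\alpha$ or $T_\alpha$ with one unary vertex inserted.'' You derive this only from $t_\alpha>0$, which forbids $\nu$ from collapsing edges but does \emph{not} bound the number of unary vertices $\nu$ inserts. Without normalising $\gamma$, the supporting objects $(S,\nu)$ include trees with arbitrarily many extra unary vertices (labelled by identities in $\swisscheese_\alpha$), so they are not all refinements of a single $S_{\ell,k}$, and the cofinality you need is a genuine statement about connectedness of under-categories rather than a classification. This can be made to work, but it is exactly the step the paper's normal-form trick for $\gamma$ bypasses.
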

\begin{proof}
Let $\gamma = [S, s, \tilde\gamma] \in \schbullet(1,0|n,m)$ where $S \colon x \to (1,0|n,m)$ is a forest in $\rfor^{K_\smallbullet}_\h$, $s \in W(S)$, and $\tilde\gamma \in \swcheesebullet(x)$.  Let us assume that $\tilde\gamma(j)\neq \id$ for all $j \in J_x$ and $s(i) > 0$ for all $i \in E(S)$.  Observe that $p\gamma \in \sch(n,m)$ is given by $[pS,ps,p\tilde\gamma]$.  If $p\gamma=\alpha$ there must be some $\nu \colon T_\alpha \to pS$ in $\trees(n,m)$ such that $\swcheese(\nu)\tilde\alpha= p\tilde\gamma$ and $W(\nu)ps = t_\alpha$.  The condition $t_\alpha(i) >0$ for all $i\in E(T_\alpha)$ implies that $t_\alpha\neq W_\Sigma(\nu)(t')$ for any $t'$ and any $\nu$ which collapses any edges.  Moreover the condition $\tilde\gamma(j) \neq \id$ for all $j$ implies that $p\tilde\gamma(j) \neq \id$ for all $j\in J_x$ such that $x\inv(j)_{\e_\smallbullet, \h_\smallbullet}$ is not empty.  We conclude that either $\nu =\id$ or $\nu$ is the insertion of the unique unary (in $pS$, not in $S$) vertex $j$ such that $x\inv(j)_{\e_\smallbullet, \h_\smallbullet}$ is not empty.  In the former case we must have $S= S_{j,k}$ for some vertex $j \in J_x$ and some $k \in \{  0,1 \}$  In the latter case we have $S=S_{i,k}$ for some edge $i$ of $T_\alpha$ and some $k$. This defines the map $p\inv(\alpha) \to W_\alpha \otimes_{\trees_\alpha} \swisscheese_\alpha$.  The map in the other direction is clear and the verification that they are inverses is left to the reader.
%
%
%
\end{proof}

\begin{figure}[htbp]
\label{figure:trees-over-Talpha}
\centering
\includegraphics[width=\textwidth]{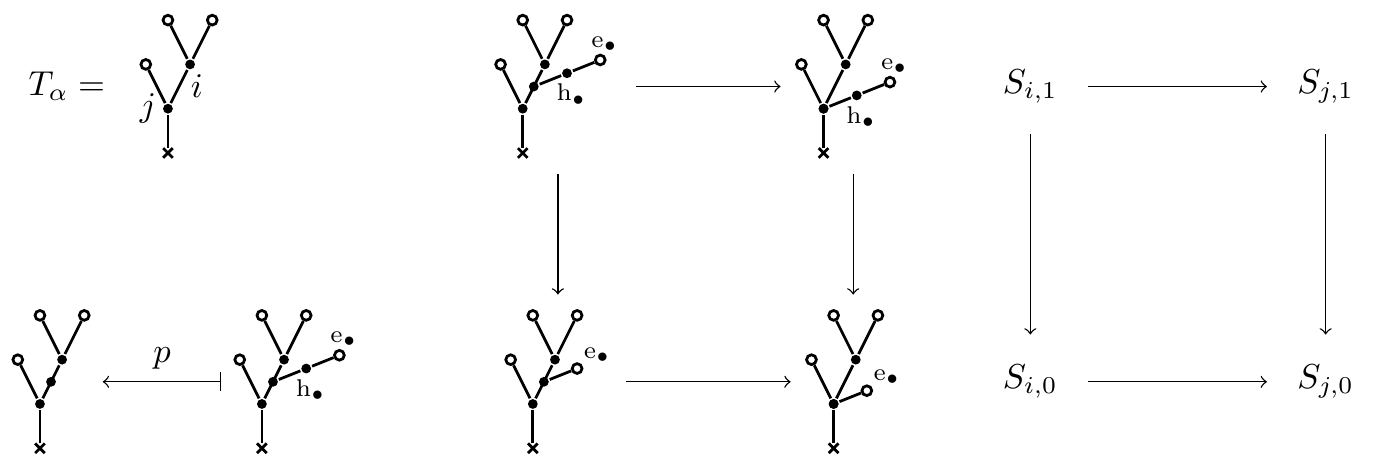}
 \caption[The category $\trees_{\alpha}$]{The edge $i$ and vertex $j$ of $T_\alpha$ give a commutative square in $\trees_{\alpha}$.  The input vertices are circles.  The output vertex ends in an $\mathsf x$.  The internal vertices are filled dots.  The input and internal vertices of $\Gamma_{0}$ and $\Gamma_{1}$ are labeled with their colors.  In addition, the image of $S_{i,1}$ under the functor $p$ is shown.  This makes it clear that the map $T_\alpha \to pS_{i,1}$ is given by inserting a single vertex.}
\end{figure}

In diagram~\ref{diagram:swisscheese-alpha-double-cube} we have $\h$-colored edges $i_1, i_2$ of $T_\alpha$ with $z_\alpha(i_1) = j = T_\alpha\inv(i_2)$.  Thus we get the commutative diagram on the left.  The image of this diagram under $\swisscheese_\alpha$ is shown on the right.

\begin{equation}\label{diagram:swisscheese-alpha-double-cube}
\begin{tikzpicture}[numbered picture]
      \matrix (m) [matrix of math nodes, row sep={2ex}, column sep={0.9em},
         text height=1.5ex, text depth=0.25ex]
      {
       S_{i_1,1} & S_{j,1} & S_{i_2, 1} & \abs{\id_{\h}} \cap \mathbb R^{d-1} & \abs{\tilde \alpha(j)}\cap  \mathbb R^{d-1} &  \abs{\id_{\h}}\cap \mathbb R^{d-1} \hspace{-2em}\\
       S_{i_1,0} & S_{j,0} & S_{i_2,0} &  \abs{\id_{\h}} & \abs{\tilde \alpha(j)} &  \abs{\id_{\h}}
      \\};
      \foreach \source/\target in {1-1/1-2, 1-1/2-1, 2-1/2-2, 1-2/2-2, 1-3/1-2, 1-3/2-3, 2-3/2-2, 1-4/1-5, 1-4/2-4, 2-4/2-5, 1-5/2-5, 1-6/1-5, 1-6/2-6, 2-6/2-5}
      {\path[->] (m-\source) edge (m-\target);}
      \foreach \source/\target/\pos/\maplabel in {}
      {\path (m-\source) edge node[map name, \pos] {$\maplabel$}
      (m-\target);}
   \end{tikzpicture}
\end{equation}
The geometric realization of the identity $\id_\h$ is just $S^{d-1}_+$, the top half of the $(d-1)$-sphere.  The input of $\tilde\alpha(j)$ corresponding to $i_1$ is a half disc and the map $\abs{\id_\h} \to \abs{\tilde\alpha(j)}$ corresponding to $S_{i_1, 0} \to S_{j,0}$ is just $\partial_{i_1}\abs{\tilde\alpha(j)} \to \abs{\tilde\alpha_j}$ (see definition~\ref{definition:geometric-realization}).  On the other hand the image of $S_{i_2, 0} \to S_{j,0}$ is the inclusion of the output boundary $\partial_\rt\abs{\tilde\alpha(j)} \to \abs{\tilde\alpha(j)}$.

\begin{definition}
  \label{definition:the-three-edges-of-S}
  Let $\epsilon_\smallbullet \in E(S_{\ell,1})$ be the unique internal edge of color $\h_\smallbullet$.  If $i \in \edges(T_\alpha)$, let $i_v$ denote the vertex inserted by $\nu \colon T_\alpha \to pS_{i,k}$.  Let $i_\inverts$ and $i_\outverts$ respectively denote the incoming and outgoing edges of $i_v$ considered as internal edges of $S_{i,k}$.  For any object $S_{\ell,k}$ of $\trees_\alpha$, let $E_\alpha(S_{\ell,k}) = \{ \epsilon_\smallbullet \}^k \sqcup (\{ i_\inverts, i_\outverts \} \cap E(S_{\ell,k}))$.  This defines a functor $E_\alpha \colon \trees_\alpha^{op} \to \mathrm{Set}$.
\end{definition}

The image under $W_\alpha$ of the square in diagram~\ref{diagram:swisscheese-alpha-double-cube} is in diagram~\ref{diagram:W-alpha-double-cube}.
\begin{equation}\label{diagram:W-alpha-double-cube}
\begin{tikzpicture}[numbered picture]
      \matrix (m) [matrix of math nodes, row sep={2.2ex}, column sep={3em},
         text height=1.7ex, text depth=0.25ex]
      {
       \lbrack0,\infty\rbrack^2 & \lbrack0,\infty\rbrack & \lbrack0,\infty\rbrack^2 \\
       \lbrack0,\infty\rbrack   & \ast       & \lbrack0,\infty\rbrack
      \\};
      \foreach \source/\target in {1-2/1-1, 1-2/1-3, 2-1/1-1, 2-3/1-3, 2-2/1-2, 2-2/2-1, 2-2/2-3}
      {\path[->] (m-\source) edge (m-\target);}
      \def\comma{,}
      \foreach \source/\target/\pos/\maplabel in {1-2/1-1/above/(\id\comma 0), 1-2/1-3/above/(\id\comma\infty), 2-1/1-1/left/(0\comma\id), 2-3/1-3/right/(0\comma\id), 2-2/1-2/right/0, 2-2/2-1/above/0, 2-2/2-3/above/\infty}
      {\path (m-\source) edge node[map name, \pos] {$\maplabel$}
      (m-\target);}
   \end{tikzpicture}
\end{equation}
More precisely,
\begin{equation}
  \label{equation:W-alpha-S-as-mapping-space}
  W_\alpha(S) = \{ s \colon E_\alpha(S)\to [0,\infty] \mid s(i_\inverts) + s(i_\outverts) = t_\alpha(i) \},
\end{equation}
and $W_\alpha(S) \to W_\alpha(S')$ for a map $S' \to S$ in $\trees_\alpha$ is given by push forward of functions along the map of finite sets $E_\alpha(S) \hookrightarrow E_\alpha(S')$.  There is no condition on $s(\epsilon_\smallbullet)$, the length of the edge of color $\h_\smallbullet$.  The isomorphism $W_\alpha(S_{i,1}) \to [0,\infty]^2$ sends $s$ to $(s(\epsilon_\smallbullet),r(s(i_\outverts), s(i_\inverts)))$ where
\[
	r(s_o, s_i) =  \frac{1-e^{-s_o}}{1 - e^{-s_i}},
\]
which lands in $[0,\infty]$ because $s_o + s_i = t_\alpha>0$. Note that $s_o=0$ if and only if $r(s_o,s_i) = 0$ and $s_o = t_\alpha$ if and only if $r(s_o, s_i)=\infty$.  Since the morphism $S_{i_1,1} \to S_{j,1}$ from diagram~\ref{diagram:swisscheese-alpha-double-cube} collapses the edge $(i_1)_\outverts$ we get $W_\alpha(S_{j,1}) \cong \{ (r_\smallbullet,r) \in W_\alpha(S_{i_1,1}) \mid  r=0\}$.  In the same diagram, the morphism $S_{i_2,1}\to S_{j,1}$ collapses the edge $(i_2)_\inverts$, so we have $W_\alpha(S_{j,1}) \cong \{ (r_\smallbullet,r) \in W_\alpha(S_{i_2,1}) \mid r=\infty \}$.  The unique morphism $S_{i,1} \to S_{i,0}$ collapses the edge $i_{\smallbullet}$ so that $W_\alpha(S_{i,0}) \cong \{ (r_\smallbullet,r) \in W_\alpha(S_{i,1})\mid r_\smallbullet = 0 \}$.  The rest can be deduced from these cases.

\begin{lemma}
\label{lemma:W-alpha-hocolim}
 For any functor $F \colon \trees_\alpha \to \topl$ the coend $W_\alpha \otimes_{\trees_{\alpha}} F$ is the homotopy colimit of $F$ over $\trees_{\alpha}$.
\end{lemma}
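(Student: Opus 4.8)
The plan is to identify $W_\alpha$ as a projectively cofibrant diagram over $\trees_\alpha^{op}$ whose values are contractible, and then invoke the standard comparison between weighted and homotopy colimits. Recall the general fact: if $\mathcal D$ is a small category and $G \colon \mathcal D^{op} \to \topl$ is projectively cofibrant with $G(d)$ weakly contractible for every $d$, then for every $F \colon \mathcal D \to \topl$ the coend $G \otimes_{\mathcal D} F$ is naturally weakly equivalent to $\hocolim_{\mathcal D} F$. Indeed, since every space is cofibrant, $(-)\otimes_{\mathcal D} F$ is left Quillen from the projective model structure on $\fun(\mathcal D^{op},\topl)$ to $\topl$ (its right adjoint $X \mapsto \map(F(-),X)$ visibly preserves objectwise fibrations and trivial fibrations); the unique map $G \to \ast$ to the constant diagram is then an objectwise weak equivalence, so $G$ is a cofibrant replacement of $\ast$; and left Quillen functors carry weak equivalences between cofibrant objects to weak equivalences, whence $G \otimes_{\mathcal D} F \simeq (Q\ast)\otimes_{\mathcal D} F = \hocolim_{\mathcal D} F$ for any cofibrant replacement $Q\ast$ (e.g.\ the Bousfield--Kan bar resolution). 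So it suffices to prove, for $\mathcal D = \trees_\alpha$, that (a) $W_\alpha(S)$ is contractible for every object $S$, and (b) $W_\alpha$ is projectively cofibrant over $\trees_\alpha^{op}$.

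Part (a) is immediate from \eqref{equation:W-alpha-S-as-mapping-space}: $W_\alpha(S)$ is the subspace of $\map(E_\alpha(S),[0,\infty])$ cut out by the affine equations $s(i_\inverts)+s(i_\outverts)=t_\alpha(i)$, one per inserted vertex $i_v$, while the coordinates $\epsilon_\smallbullet$ are unconstrained. Hence $W_\alpha(S)$ is a product of copies of $[0,\infty]$ with, for each such equation, the level set $\{(a,b)\in[0,\infty]^2 : a+b=t_\alpha(i)\}$, which -- as $t_\alpha(i)>0$ -- is an interval when $t_\alpha(i)<\infty$ and an arc $(\{\infty\}\times[0,\infty])\cup([0,\infty]\times\{\infty\})$ when $t_\alpha(i)=\infty$; both are contractible. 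This is also visible from the explicit homeomorphisms recorded after \eqref{equation:W-alpha-S-as-mapping-space}: $W_\alpha(S_{i,1})\cong[0,\infty]^2$, $W_\alpha(S_{i,0})\cong[0,\infty]$, $W_\alpha(S_{j,1})\cong[0,\infty]$, and $W_\alpha(S_{j,0})\cong\ast$.

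Part (b) is the heart of the matter. By the remark following Definition~\ref{definition:trees-alpha} -- together with the observation that the objects $S_{i,k}$ and $S_{j,k}$ carry no nontrivial automorphisms over $T_\alpha$ -- every non-identity morphism of $\trees_\alpha$ strictly decreases the number of internal edges. Hence $\trees_\alpha^{op}$ is a direct category, graded by the number of internal edges of the underlying tree, so the projective and Reedy model structures on $\fun(\trees_\alpha^{op},\topl)$ agree and it is enough to show that $W_\alpha$ is Reedy cofibrant; that is, that for each $S$ the latching map $L_S W_\alpha \to W_\alpha(S)$ is a (closed) cofibration of spaces, where $L_S W_\alpha$ is the colimit of $W_\alpha(S')$ over all non-identity morphisms $S \to S'$ in $\trees_\alpha$. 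Using the face descriptions recorded after \eqref{equation:W-alpha-S-as-mapping-space} one reads these off: $L_{S_{j,0}}W_\alpha = \emptyset$; $L_{S_{j,1}}W_\alpha$ is a point, included as an endpoint of $W_\alpha(S_{j,1})\cong[0,\infty]$; $L_{S_{i,0}}W_\alpha$ is the two endpoints $\{0,\infty\}$ of $W_\alpha(S_{i,0})\cong[0,\infty]$; and $L_{S_{i,1}}W_\alpha$ is the union of the three faces $\{r_\smallbullet=0\}$, $\{r=0\}$, $\{r=\infty\}$ of $W_\alpha(S_{i,1})\cong[0,\infty]^2$. In each case the latching map is the inclusion of a subcomplex of a cube, hence a cofibration. (See Figure~\ref{figure:trees-over-Talpha} for the relevant commutative squares; this is the $\trees_\alpha$-analogue of the cofibrancy statements established for the $W$-construction with the techniques of Berger--Moerdijk cited earlier.) Combining (a) and (b) with the general fact above yields $W_\alpha \otimes_{\trees_\alpha} F \simeq \hocolim_{\trees_\alpha} F$ for every $F$.

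I expect (b) to be the main obstacle: organizing the Reedy grading on $\trees_\alpha$, confirming the absence of nontrivial automorphisms of the $S_{i,k}$ and $S_{j,k}$, and identifying every latching object as an explicit subcomplex of a cube require a careful but routine walk through the combinatorics of $T_\alpha$. Part (a) and the abstract passage from ``cofibrant diagram with contractible values'' to the statement about the homotopy colimit are essentially formal.
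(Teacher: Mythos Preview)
Your argument is correct, but the paper takes a more direct route.  The paper simply observes (from diagrams~\ref{diagram:swisscheese-alpha-double-cube} and~\ref{diagram:W-alpha-double-cube}) that $W_\alpha(S)$ is the geometric realization of the nerve of the under category $S/\trees_\alpha$, and that the structure maps agree with those induced by the nerves; since $S \mapsto |N(S/\trees_\alpha)|$ is precisely the Bousfield--Kan weight, the coend $W_\alpha \otimes_{\trees_\alpha} F$ is then \emph{literally} the Bousfield--Kan model for $\hocolim_{\trees_\alpha} F$, and there is nothing more to prove.  Your approach instead establishes the conclusion abstractly: you check that $W_\alpha$ is Reedy (hence projectively) cofibrant with contractible values, and invoke the general comparison between such weights and the homotopy colimit.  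This is longer but more portable---it does not require recognising the cube $[0,\infty]^{|E_\alpha(S)|}$ as (a subdivision of) a nerve, and the identical template handles Lemma~\ref{lemma:W-alpha-1-hocolim} without further thought.  The paper's identification, by contrast, buys you an on-the-nose equality rather than a zig-zag of weak equivalences.
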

\begin{proof}
It is clear from diagrams~\ref{diagram:W-alpha-double-cube} and \ref{diagram:swisscheese-alpha-double-cube} that $W_\alpha(S)$ is the geometric realization of the nerve of the under category of $S$ for each object $S \in \trees_\alpha$.  In addition the maps $W_\alpha(S) \to W_\alpha(S')$ for $S' \to S$ agree with the maps obtained from the nerves of under categories.
\end{proof}

\begin{lemma}
  \label{lemma:p-inv-alpha-is}
  We can explicitly compute $p\inv(\iota\alpha)$ as
  \[
  	p\inv(\iota\alpha) \simeq \abs{\swcheese(T_\alpha)\tilde\alpha} \simeq (S^{d-1})^{\vee n},
  \]
  where $\swcheese(T_\alpha)\tilde\alpha$ is the composition of all vertex labels from $\iota\alpha$.
\end{lemma}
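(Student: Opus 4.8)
The plan is to evaluate the fiber through its coend presentation. By Lemma~\ref{lemma:p-inv-alpha-coend} we have $p\inv(\iota\alpha)=W_\alpha\otimes_{\trees_\alpha}\swisscheese_\alpha$, which by Lemma~\ref{lemma:W-alpha-hocolim} is the homotopy colimit $\hocolim_{\trees_\alpha}\swisscheese_\alpha$, so it suffices to compute this homotopy colimit. First I would read off $\swisscheese_\alpha$ explicitly from Definitions~\ref{definition:W-alpha} and~\ref{definition:trees-alpha}, as in the discussion around diagram~\ref{diagram:swisscheese-alpha-double-cube}: for an internal vertex $j$ one has $\swisscheese_\alpha(S_{j,0})=\abs{\tilde\alpha(j)}$ and $\swisscheese_\alpha(S_{j,1})=\abs{\tilde\alpha(j)}\cap\reals^{d-1}$; for an $\h$-edge $i$ one has $\swisscheese_\alpha(S_{i,0})=\abs{\id_\h}\cong S^{d-1}_+$ and $\swisscheese_\alpha(S_{i,1})=\abs{\id_\h}\cap\reals^{d-1}\cong S^{d-2}$; the arrow $S_{i,0}\to S_{z_\alpha(i),0}$ is the inclusion of the hole hemisphere $\partial_i\abs{\tilde\alpha(z_\alpha(i))}$, the arrow $S_{i,0}\to S_{T_\alpha\inv(i),0}$ is the inclusion of the output hemisphere $\partial_\rt\abs{\tilde\alpha(T_\alpha\inv(i))}$, and each $S_{\ell,1}\to S_{\ell,0}$ is the evident inclusion of $(-)\cap\reals^{d-1}$.

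Next I would pin down the shape of $\trees_\alpha$: using the list of morphisms recorded just before Lemma~\ref{lemma:p-inv-alpha-coend}, one checks that $\trees_\alpha\cong\calE\times\{1\!\to\!0\}$, where $\calE$ is the incidence poset of the tree $z_\alpha$ --- objects the internal vertices $j\in J_{z_\alpha}$ and the $\h$-edges $i\in(I_{z_\alpha})_\h\sqcup\{\rt\}$, with $i<j$ whenever $j$ is an end of $i$. Since $0$ is terminal in $\{1\!\to\!0\}$, the inclusion $\calE\cong\calE\times\{0\}\hookrightarrow\trees_\alpha$ is homotopy cofinal, so $\hocolim_{\trees_\alpha}\swisscheese_\alpha\simeq\hocolim_{\calE}\swisscheese_\alpha$, where the right-hand side is the homotopy colimit over $\calE$ of $\swisscheese_\alpha$ restricted to the objects with $k=0$; the ``collapsed half-disc'' layer $k=1$ drops out up to homotopy. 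Now $\calE$ is a direct category with the edges in degree $0$ and the vertices in degree $1$, and the latching map at a vertex $j$ is the inclusion into $\abs{\tilde\alpha(j)}$ of the pairwise disjoint hemispheres $\partial_i$ --- over the $\h$-coloured inputs $i$ of $\tilde\alpha(j)$ --- together with $\partial_\rt$ over the output edge of $j$; this is a closed cofibration. Hence the diagram is Reedy cofibrant and $\hocolim_{\calE}\swisscheese_\alpha\cong\colim_{\calE}\swisscheese_\alpha$.

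Finally, $\colim_{\calE}\swisscheese_\alpha$ glues the realizations $\abs{\tilde\alpha(j)}$ together along hemispheres, one for each internal $\h$-edge $i$ of $z_\alpha$ (identifying $\partial_i$ of the lower vertex realization with $\partial_\rt$ of the upper one); the $\h$-leaves and the root edge $\rt$ contribute nothing, since there $\abs{\id_\h}$ is glued onto its own image, and the $\f$-coloured inputs of each $\tilde\alpha(j)$ are leaves (an internal $\f$-coloured edge would be the output of an internal vertex, but all internal vertices of $z_\alpha$ are $\h$-coloured). This is exactly how the geometric realization of the composite $\swcheese(T_\alpha)\tilde\alpha$ is assembled from the realizations of its vertex configurations, so $\colim_{\calE}\swisscheese_\alpha\cong\abs{\swcheese(T_\alpha)\tilde\alpha}$, which gives the first equivalence. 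For the second, write $\beta=\swcheese(T_\alpha)\tilde\alpha\in\swcheese^\h(n,m)$; by Definition~\ref{definition:geometric-realization}, $\abs\beta$ is $\bar D^d_+$ with $m$ half-disc bites removed from $\reals^{d-1}$ and $n$ disjoint open discs removed from the interior. Removing the boundary bites is a homotopy equivalence, and a contractible $d$-manifold with $n$ disjoint open discs removed from its interior is homotopy equivalent to $(S^{d-1})^{\vee n}$, so $\abs\beta\simeq(S^{d-1})^{\vee n}$.

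The step I expect to be the main obstacle is the middle bookkeeping: reading the diagram $\swisscheese_\alpha$ and the poset $\calE$ correctly off the definitions --- keeping straight the distinct roles of internal $\h$-edges, $\h$-leaves, and the root edge --- and then verifying that the strict colimit of this boundary-gluing diagram really is $\abs{\swcheese(T_\alpha)\tilde\alpha}$, together with the cofibration checks (via neighbourhood deformation retracts of the hemispheres) that license passing from the homotopy colimit to the strict colimit. The cofinality and Reedy-cofibrancy inputs themselves are standard.
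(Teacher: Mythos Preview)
Your proposal is correct and follows essentially the same route as the paper: reduce to a homotopy colimit via Lemmas~\ref{lemma:p-inv-alpha-coend} and~\ref{lemma:W-alpha-hocolim}, pass by cofinality to the $k=0$ layer, argue the resulting diagram is cofibrant so that the strict colimit computes the homotopy colimit, and identify that colimit with $\abs{\swcheese(T_\alpha)\tilde\alpha}$. The only packaging differences are that the paper restricts further to \emph{internal} edges and vertices (your $\h$-leaves and root are harmless, as you note) and presents the reduced diagram as a coequalizer $\coprod_i\abs{\id_\h}\rightrightarrows\coprod_j\abs{\tilde\alpha(j)}$ rather than via your incidence poset $\calE$; your Reedy-cofibrancy argument and the paper's ``cofibrations with disjoint images'' check are the same observation.
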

\begin{proof}
   Let $\trees_{\alpha,0}$ denote the full subcategory of $\trees_\alpha$ consisting of objects $S_{j,0}$ and $S_{i,0}$ for internal vertices $j$ and internal edges $i$.  This category is homotopy terminal, so by lemma~\ref{lemma:W-alpha-hocolim} and lemma~\ref{lemma:p-inv-alpha-coend} we have $p\inv(\iota\alpha) = \hocolim_{\trees_{\alpha,0}} \swisscheese_\alpha$.  This is the same as the homotopy colimit of the coequalizer diagram
     \[
     	\coprod_{i \in E(T_\alpha)} \abs{\id_\h} \rightrightarrows \coprod_{j\in V(T_\alpha)} \abs{\tilde\alpha(j)},
     \]
   	 where one arrow is given by including into output parts of the boundaries of $\abs{\tilde\alpha(j)}$'s, and the other arrow is given by including into input boundaries.  These maps are cofibrations with disjoint images.  Each space in the coequalizer diagram is cofibrant.  Thus the coequalizer diagram is already cofibrant as a functor $(\cdot \rightrightarrows \cdot) \to \topl$.  Thus we can compute the normal colimit.  It is clear that this is the same as composing the $\tilde\alpha(j)$'s via $T_\alpha$ then taking the realization of the result.  In addition $\abs{\beta}$ is equivalent to a wedge of $n$ spheres of dimension $d-1$ if $\beta \in \swcheese^{\h}(n,m)$.
\end{proof}

\begin{definition}
  \label{definition:W-alpha-1}
  Let $\trees_{\alpha,1}$ denote the full subcategory of $\trees_\alpha$ where we discard the objects $S_{j,0}$ and $S_{i,0}$ for $j \in J_{z_\alpha}$ and $i \in E(T_\alpha)$.  Define a functor $W_{\alpha,1} \colon \trees_{\alpha,1}^{op}\to \topl$ by setting
  \[
  	W_{\alpha,1}(S) = \{ s \colon E_\alpha(S) \to [0,\infty] \mid \sum_{i\in E_\alpha(S)} s(i) = \infty \}.
  \]
\end{definition}

\begin{lemma}
  \label{lemma:p1-inv-alpha-as-a-coend}
  Suppose $t_\alpha < \infty$, and $n = 1$, then $p_1\inv(\alpha)$ is given by the coend
  \[
  	W_\alpha \otimes_{\trees_{\alpha,1}} \swisscheese_\alpha,
  \]
  where $\swisscheese_\alpha$ is the functor in definition~\ref{definition:W-alpha} restricted to $\trees_{\alpha,1}$ and $W_{\alpha,1}$ is defined in \ref{definition:W-alpha-1}.
\end{lemma}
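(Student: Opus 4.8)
The plan is to re-run the proof of Lemma~\ref{lemma:p-inv-alpha-coend} with $\schbullet$ replaced by $F(\schonebullet)$, and then use the two hypotheses to shrink the indexing category to $\trees_{\alpha,1}$ and the weight functor to $W_{\alpha,1}$ of Definition~\ref{definition:W-alpha-1} (which, as the closing line of the statement indicates, is the weight functor meant in the coend). First I would record the coend presentation of $F(\schonebullet)(1,0|1,m)$: combining the $W$-construction with the operadic left adjoint along $\rfor^{K_\smallbullet}_{\e\leq 1;\h}\hookrightarrow\rfor^{K_\smallbullet}_\h$, exactly as in the presentation of $\schinf(n,m)$ given just before Definition~\ref{definition:W-alpha}, a point is represented by a triple $[S,s,\tilde\gamma]$, with $S\colon x\to(1,0|1,m)$ a forest arising as there from a forest into a young forest $y$ lying in $\rfor^{K_\smallbullet}_{\e\leq 1;\h}$ followed by a further forest, $s\in W(S)$, and $\tilde\gamma\in\swcheesebullet^{\h 1}$ of the source, modulo the relations of equation~\ref{equation:FkWO(z)}. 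As in Lemma~\ref{lemma:p-inv-alpha-coend} I would normalize a representative so that $\tilde\gamma(j)\neq\id$ for every internal vertex $j$ and $s(i)>0$ on every finite edge of $S$.

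Next, $p_1[S,s,\tilde\gamma]=[pS,ps,p\tilde\gamma]\in\schinf(1,m)$, where $p$ deletes the collapsed disc and the left over collapsed half-disc if there is one (Definition~\ref{definition:three-p's}). Requiring this to equal $\alpha$ yields a forest $\nu\colon T_\alpha\to pS$ in $\trees(1,m)$ with $\swcheese(\nu)\tilde\alpha=p\tilde\gamma$ and $W_\Sigma(\nu)(ps)=t_\alpha$. Positivity of $t_\alpha$ on $E(T_\alpha)$ prevents $\nu$ from collapsing edges, and $\tilde\gamma(j)\neq\id$ prevents $\nu$ from inserting any unary vertex other than the one created by erasing the $\smallbullet$-coloured part of $S$; hence, exactly as in Lemma~\ref{lemma:p-inv-alpha-coend}, $S$ must be one of the $S_{j,k}$ or $S_{i,k}$ of Definition~\ref{definition:trees-alpha}, and we obtain a map from $p_1^{-1}(\alpha)$ into a quotient over some subcategory of $\trees_\alpha$.

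Now use the hypotheses to identify that subcategory. The combination ``$n=1$, $t_\alpha<\infty$'' says $\alpha$ has exactly one full disc and that its chosen representative involves no Kan-extension grafting, i.e.\ the forest $f$ in $\alpha=[f,g,t,\tilde\alpha]$ has no internal edges (so $t_\alpha=W_\infty(f)t$ takes only finite values). Consequently every $\infty$-length edge of $S$ lies over the $\smallbullet$-coloured part that $p$ erases, so the lengths of the edges in $E_\alpha(S)$ --- namely $\epsilon_\smallbullet$ together with the two halves $i_\inverts,i_\outverts$ of a split internal edge --- are forced to sum to $\infty$, which is precisely the defining condition of $W_{\alpha,1}(S)$. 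In particular, because $E_\alpha(S_{j,0})=\emptyset$ and, for $i\in E(T_\alpha)$, $E_\alpha(S_{i,0})=\{i_\inverts,i_\outverts\}$ with $s(i_\inverts)+s(i_\outverts)=t_\alpha(i)<\infty$, neither $S_{j,0}$ ($j\in J_{z_\alpha}$) nor $S_{i,0}$ ($i$ internal) can occur; what survives is exactly $\trees_{\alpha,1}$, equipped with the weight spaces $W_{\alpha,1}$. Finally I would check that the relations restrict to the coend relations for $W_{\alpha,1}\otimes_{\trees_{\alpha,1}}\swisscheese_\alpha$, with $\swisscheese_\alpha$ the pullback functor of Definition~\ref{definition:W-alpha}; the inverse map sends a class over $S_{i,k}$ or $S_{j,k}$ to the evident decorated forest, and checking that the two composites are the identity is routine, as at the end of the proof of Lemma~\ref{lemma:p-inv-alpha-coend}.

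The main obstacle is the third step: pinning down exactly which triples $[S,s,\tilde\gamma]$ represent points of $F(\schonebullet)(1,0|1,m)$, as opposed to of the larger space $\schbullet(1,0|1,m)$, and showing that this condition, in the presence of $t_\alpha<\infty$, is equivalent to ``$S\in\trees_{\alpha,1}$ and $s\in W_{\alpha,1}(S)$''. This requires carefully unwinding the interaction of the operadic left adjoint $F$ with the $W_\infty$/$W_\Sigma$ bookkeeping, in particular the assertion that a collapsed disc in a degree $\leq 1$ configuration is always attached along an edge of length $\infty$ --- which is what simultaneously discards the objects $S_{j,0}$, $S_{i,0}$ (internal $i,j$) and replaces the weight functor $W_\alpha$ by $W_{\alpha,1}$.
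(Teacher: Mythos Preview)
Your plan matches the paper's proof, and you correctly locate the crux in your final paragraph.  There is, however, a slip in your third paragraph: you analyse the Kan-extension forest $f$ in the representative of $\alpha$ (which indeed has no internal edges, since $t_\alpha<\infty$), but the forest that matters is the one in the representative $(f,g,s,\tilde\gamma)$ of $\gamma\in F(\schonebullet)(1,0|1,m)$.  As written, ``consequently every $\infty$-length edge of $S$ lies over the $\smallbullet$-coloured part'' only shows that \emph{if} $S$ has an infinite edge then it lies in $E_\alpha(S)$; it does not produce one, so the clause ``so the lengths \dots\ are forced to sum to $\infty$'' is unjustified at that point.

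The paper resolves your ``main obstacle'' in one stroke, and this is where the hypothesis $n=1$ actually enters.  The target $(1,0|1,m)$ has \emph{two} inputs of disc-like colour (one $\e_\smallbullet$ and one $\e$), while the condition $y\leq 1$ says each component of $y$ has at most one such input.  Hence the Kan-extension forest $f\colon y\to(1,0|1,m)$ in $\gamma$'s representative is forced to have at least one internal edge, and $W_\infty(f)s$ assigns it length $\infty$.  Since the relations in $\schbullet$ preserve infinite edge-lengths, the normalized triple $(S,s',\tilde\gamma)$ still carries an edge of length $\infty$; combined with $t_\alpha<\infty$ on $E(T_\alpha)$ this forces $s'\in W_{\alpha,1}(S)$ and eliminates $S_{j,0}$ and $S_{i,0}$ for internal $i$, $j$, exactly as you wanted.
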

\begin{proof}
  Let $\gamma\in F(\schonebullet)(1,m)$ such that $p_1(\gamma) = \alpha$.  Pick a representative $(f,g,s, \tilde\gamma)$ where $f \colon y \to (1,0|1,m)$, $y\leq 1$, $g\colon z \to y$, $s\in W(g)$ and $\tilde\gamma\in\swcheesebullet(z)$.  Consider  $\iota\gamma \in \schbullet(1,m)$, which is represented by $(fg,W_\infty(f)s, \tilde\gamma)$.  Recall that the condition $y \leq 1$  means that each connected component of the young forest $y$ has at most one input whose color lives in $\{ \e, \e_\smallbullet \}$.  This implies that $f$ has at least one internal edge $i \in E(f)$.  Thus $W_\infty(f)s(i) =\infty$ when $i$ is viewed as an internal edge in $fg$.

  We know $p\iota\gamma = \iota\alpha$, so $\iota\gamma$ is represented by some triple $(S,s',\tilde\gamma)$ with $S \in \trees_\alpha$, $s' \in W_\alpha(S)$, and $\tilde\gamma\in \swisscheese_\alpha(S)$.  The relations in $\schbullet$ preserve edges of length $\infty$, so we must have $s'(i) = \infty$ for some $i \in E(S)$.  We are assuming $t_\alpha (i) < \infty$ for all $i \in E(T_\alpha)$, so the infinite edge in $S$ must be in $E_\alpha(S)$.  This implies $s' \in W_{\alpha,1}(S)$.  Moreover we cannot have such an infinite edge if $S = S_{j,0}$ for some vertex $j$ or $S=S_{i,0}$ for some internal edge $i$.  Thus $S \in \trees_{\alpha,1}$.  This defines the map from $p\inv(\alpha)$ to the coend.  We leave the remainder to the reader.
\end{proof}

\begin{lemma}
\label{lemma:W-alpha-1-hocolim}
 For any functor $F \colon \trees_{\alpha,1} \to \topl$ the coend $W_{\alpha,1} \otimes_{\trees_{\alpha,1}} F$ is the homotopy colimit of $F$ over $\trees_{\alpha,1}$.
\end{lemma}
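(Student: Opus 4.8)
The proof is the one-dimension-lower analogue of the proof of Lemma~\ref{lemma:W-alpha-hocolim}: the plan is to identify $W_{\alpha,1}$ with the standard Bousfield--Kan cofibrant resolution $S\mapsto\lvert N(S/\trees_{\alpha,1})\rvert$ of the terminal $\trees_{\alpha,1}^{op}$-diagram, and then conclude that the coend against it computes the homotopy colimit, exactly as there.

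First I would pin down $W_{\alpha,1}(S)$ for each object $S$ of $\trees_{\alpha,1}$. Because $t_\alpha(i)<\infty$ for every internal edge $i$ of $T_\alpha$ (the hypothesis in force from Lemma~\ref{lemma:p1-inv-alpha-as-a-coend}), the relations $s(i_\inverts)+s(i_\outverts)=t_\alpha(i)$ that $W_{\alpha,1}(S)$ inherits as a subspace of $W_\alpha(S)$ (cf.\ the proof of Lemma~\ref{lemma:p1-inv-alpha-as-a-coend}) force $s(i_\inverts),s(i_\outverts)<\infty$; hence the condition $\sum_\epsilon s(\epsilon)=\infty$ of Definition~\ref{definition:W-alpha-1} is equivalent to $s(\epsilon_\smallbullet)=\infty$ whenever $\epsilon_\smallbullet\in E_\alpha(S)$. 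Concretely, for $S=S_{i,1}$ with $i$ a genuine internal edge this gives $W_{\alpha,1}(S_{i,1})=\{(s(\epsilon_\smallbullet),s(i_\inverts),s(i_\outverts))=(\infty,b,t_\alpha(i)-b):0\le b\le t_\alpha(i)\}$, an interval; for $S_{i,1}$ with $i$ a leaf slot or the root, again an interval; and for $S=S_{j,1}$ with $j\in J_{z_\alpha}$, a point. These are precisely the values of $\lvert N(-/\trees_{\alpha,1})\rvert$: the vertex--objects $S_{j,1}$ are maximal in the poset $\trees_{\alpha,1}$, so their under categories are trivial, while the under category of $S_{i,1}$ is the poset $\{S_{i,1}<S_{T_\alpha^{-1}(i),1},\ S_{i,1}<S_{z_\alpha(i),1}\}$ (dropping whichever endpoint is not a vertex of $z_\alpha$), whose nerve is an interval. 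I would then check, reading off diagrams~\ref{diagram:swisscheese-alpha-double-cube} and~\ref{diagram:W-alpha-double-cube} with the $\h_\smallbullet$-coordinate pinned at $\infty$, that the structure maps of $W_{\alpha,1}$ agree with those of $\lvert N(-/\trees_{\alpha,1})\rvert$ under these homeomorphisms --- in particular that the two maps $W_{\alpha,1}(S_{T_\alpha^{-1}(i),1})\to W_{\alpha,1}(S_{i,1})$ and $W_{\alpha,1}(S_{z_\alpha(i),1})\to W_{\alpha,1}(S_{i,1})$ (which collapse $i_\inverts$, resp.\ $i_\outverts$) hit the two \emph{distinct} endpoints $b=0$ and $b=t_\alpha(i)$ of the interval.

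Having identified $W_{\alpha,1}$ with $S\mapsto\lvert N(S/\trees_{\alpha,1})\rvert$, the coend $W_{\alpha,1}\otimes_{\trees_{\alpha,1}}F$ is the bar construction $B(\ast,\trees_{\alpha,1},F)$, that is, $\hocolim_{\trees_{\alpha,1}}F$, by exactly the reasoning used in Lemma~\ref{lemma:W-alpha-hocolim}. I expect the main obstacle to be the middle step: the case-by-case verification that $W_{\alpha,1}(S)\cong\lvert N(S/\trees_{\alpha,1})\rvert$ compatibly with every structure map, across the various types of object $S_{\ell,1}$. This is precisely where the hypothesis $t_\alpha<\infty$ is essential --- without it the $\epsilon_\smallbullet$-coordinate need not be the only one that can equal $\infty$, and $W_{\alpha,1}(S_{i,1})$ would be a two-dimensional union of faces of the cube $[0,\infty]^{E_\alpha(S_{i,1})}$ rather than the one-dimensional nerve, so the clean identification with the Bousfield--Kan model would break down.
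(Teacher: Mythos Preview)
Your approach is exactly the paper's: the proof given there is the single sentence ``The argument here is similar to the proof of lemma~\ref{lemma:W-alpha-hocolim},'' and you have unpacked precisely that argument---identify $W_{\alpha,1}$ with $S\mapsto\lvert N(S/\trees_{\alpha,1})\rvert$ and invoke the Bousfield--Kan model.  Your case analysis of the objects $S_{\ell,1}$ and the check that the structure maps hit distinct endpoints is the content the paper suppresses; your reading of $W_{\alpha,1}(S)$ as a subspace of $W_\alpha(S)$ (so that the constraint $s(i_{\mathrm{in}})+s(i_{\mathrm{out}})=t_\alpha(i)$ is in force) is the correct interpretation in view of the proof of Lemma~\ref{lemma:p1-inv-alpha-as-a-coend}, and your observation that $t_\alpha<\infty$ is what makes the identification with the nerve go through is a point the paper leaves implicit.
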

\begin{proof}
  The argument here is similar to the proof of lemma~\ref{lemma:W-alpha-hocolim}.
\end{proof}

\begin{corollary}
  \label{corollary:p1-inv-alpha-degree-1-is}
  If $t_\alpha < 0$ and $n =1$, then the fiber $p_1\inv(\alpha)$ is equivalent to $\partial_\h\abs{\swcheese(T_\alpha)\tilde\alpha} \simeq S^{d-1}$.
\end{corollary}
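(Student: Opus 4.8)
The plan is to push the statement through Lemmas~\ref{lemma:p1-inv-alpha-as-a-coend} and \ref{lemma:W-alpha-1-hocolim} and then read off a concrete space, in exact parallel with the treatment of $p\inv(\iota\alpha)$ in Lemma~\ref{lemma:p-inv-alpha-is}. (The hypothesis ``$t_\alpha<0$'' should read $t_\alpha<\infty$, so that Lemma~\ref{lemma:p1-inv-alpha-as-a-coend} applies.) Combining Lemma~\ref{lemma:p1-inv-alpha-as-a-coend} with Lemma~\ref{lemma:W-alpha-1-hocolim} gives
\[
p_1\inv(\alpha)\;=\;W_{\alpha,1}\otimes_{\trees_{\alpha,1}}\swisscheese_\alpha\;\simeq\;\hocolim_{\trees_{\alpha,1}}\swisscheese_\alpha,
\]
so everything reduces to computing this homotopy colimit.

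Next I would unwind $\swisscheese_\alpha$ restricted to $\trees_{\alpha,1}$. By Definition~\ref{definition:W-alpha-1} the surviving objects are the $S_{j,1}$ for internal vertices $j$, the $S_{i,1}$ for $\h$-colored edges $i$ of $T_\alpha$, and the caps $S_{\ell,0}$ sitting over the root and over the $\h$-colored leaves (the internal $S_{i,0}$ and $S_{j,0}$ having been discarded); the morphisms are the incidences recorded in the remark after Definition~\ref{definition:trees-alpha}. Reading off values as in diagram~\ref{diagram:swisscheese-alpha-double-cube} and \ref{diagram:W-alpha-double-cube}, this functor assigns the flat slice $\abs{\tilde\alpha(j)}\cap\mathbb R^{d-1}$ to each internal vertex, the equatorial sphere $\abs{\id_\h}\cap\mathbb R^{d-1}\cong S^{d-2}$ to each $\h$-edge, and the hemisphere $\abs{\id_\h}\cong S^{d-1}_+$ to each surviving cap; the structure maps are the input/output boundary inclusions of Definition~\ref{definition:geometric-realization}, intersected with $\mathbb R^{d-1}$ on the internal maps and realised as the equator inclusion $S^{d-2}\hookrightarrow S^{d-1}_+$ on the cap maps. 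Just as in Lemma~\ref{lemma:p-inv-alpha-is}, a homotopy-terminal subcategory reduces the indexing to the evident gluing (generalized coequalizer) diagram, and since all these maps are cofibrations with pairwise disjoint images into cofibrant spaces the diagram is cofibrant, so the homotopy colimit coincides with the ordinary colimit.

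That colimit glues the flat slices along the internal $\h$-edges into the flat region $\abs{\swcheese(T_\alpha)\tilde\alpha}\cap\mathbb R^{d-1}$ of the composite $\beta\coloneqq\swcheese(T_\alpha)\tilde\alpha\in\swcheese^\h(1,m)$, and then caps the outer boundary sphere (at the root) and each of the $m$ inner boundary spheres (at the $\h$-leaves) with a hemisphere $S^{d-1}_+$; by Definition~\ref{definition:geometric-realization} this is exactly $\partial_\h\abs\beta=\partial\bigl(\bar D^d_+-\bigcup_i(D^d_\h)_i\bigr)$. Finally, for any $\beta\in\swcheese^\h(n,m)$ the space $\partial_\h\abs\beta$ is the $(d-1)$-sphere $\partial\bar D^d_+$ with the $m$ flat discs of the removed half-discs excised and re-capped by their upper hemispheres; re-capping an excised disc does not change the homotopy type, so $\partial_\h\abs\beta\simeq S^{d-1}$. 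Applying this to $\beta=\swcheese(T_\alpha)\tilde\alpha$ yields $p_1\inv(\alpha)\simeq\partial_\h\abs{\swcheese(T_\alpha)\tilde\alpha}\simeq S^{d-1}$.

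The main obstacle is the combinatorial bookkeeping in the middle step: one must verify precisely which $k=0$ caps survive in $\trees_{\alpha,1}$ and that $\swisscheese_\alpha$ sends each such cap to $\abs{\id_\h}$ while sending each internal $\h$-edge to the equatorial $S^{d-2}$, and one must justify the reduction to the coequalizer-type shape and the resulting identification of $\hocolim$ with $\colim$ --- this is legitimate because $T_\alpha$ is a tree, so the homotopy colimit assembles edge by edge with no cycles to obstruct it. Once this is in place, recognising the colimit as $\partial_\h\abs{\cdot}$ and computing its homotopy type are routine.
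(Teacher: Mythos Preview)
Your approach is correct and is exactly the one the paper takes: apply Lemmas~\ref{lemma:p1-inv-alpha-as-a-coend} and \ref{lemma:W-alpha-1-hocolim}, then argue as in Lemma~\ref{lemma:p-inv-alpha-is} that the homotopy colimit coincides with the ordinary colimit, which is $\partial_\h\abs{\swcheese(T_\alpha)\tilde\alpha}$. The paper's own proof is two sentences (``By the same argument as in lemma~\ref{lemma:p-inv-alpha-is}\dots''), so your elaboration of the combinatorics of $\trees_{\alpha,1}$ and the explicit identification of the colimit with the $\h$-colored boundary is additional detail rather than a different argument; you have also correctly flagged the typo $t_\alpha<0$ for $t_\alpha<\infty$.
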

\begin{proof}
  By the same argument as in lemma~\ref{lemma:p-inv-alpha-is}, $\hocolim_{\trees_{\alpha,1}} \swisscheese_\alpha$ is equivalent to $\colim_{\trees_{\alpha,1}} \swisscheese_\alpha$.  This is easily computed as the $\h$-colored boundary of the composite of $\tilde \alpha$.
\end{proof}

\begin{proof}[proof of {proposition~\ref{proposition:fiber-equivalence}}]
  Recall $\alpha$ is represented by $f \colon y \to (n,m)$, $y\leq 1$, $g \colon z_\alpha \to y$, $t \in W(g)$ and $\tilde \alpha \in \swcheese(z_\alpha)$.  By applying relations in $\schinf$ we may assume $0<t<\infty$.  We may think of $(g, t)$ as representing an element of $\schone(y)$ which we can write as $(\alpha(j))_{ j \in J_y}$.  If $\alpha(j) \in \sch(n_j,m_j)$ then $n_j \leq 1$.  Clearly $p_1\inv(\alpha(j)) \simeq p\inv(\alpha(j)) \simeq \ast$ when $n_j = 0$.  Since $t_{\alpha(j)} < \infty$ we can use corollary~\ref{corollary:p1-inv-alpha-degree-1-is} to conclude $p_1\inv(\alpha(j)) \simeq \partial_\h \abs{(\swcheese(g)(\alpha))(j)}$.  The fiber $p_1\inv(\alpha)$ is equal to the colimit of the diagram
    \[
    	\coprod_{i \in E(f)} \abs{1_\h} \rightrightarrows \coprod_{j \in V(f)} p_1\inv(\alpha(j)),
    \]
    where one arrow is given by $\abs{1_\h} \simeq \partial_i\abs{\alpha(y(i))} \to \partial_\h\abs{(\swcheese(g)(\alpha))(y(i))}$ and the other by $\abs{1_\h}\simeq \partial_{\rt}\abs{\alpha(f(i))} \to \partial_\h\abs{(\swcheese(g)(\alpha))(y(i))}$.  This colimit is clearly $(S^{d-1})^{\vee n} \simeq p\inv(\iota\alpha)$.
\end{proof}

\begin{figure}[htbp]
 \centering
\label{figure:p-inv-alpha}
\includegraphics[width=\textwidth]{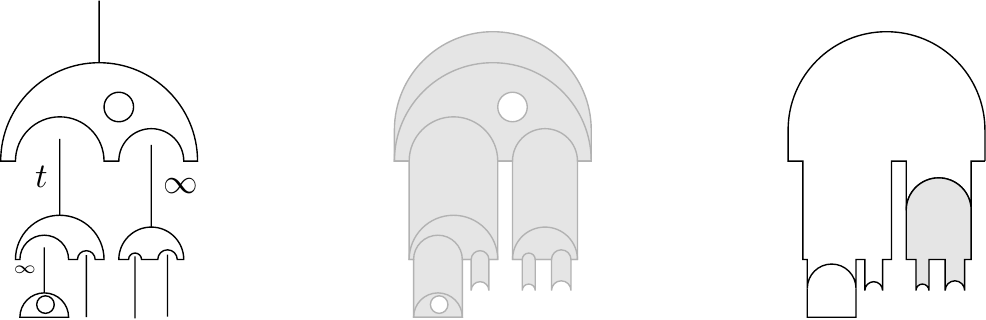}
\caption[Comparing $p_1\inv(\alpha)$ and $p\inv(\alpha)$]{On the left is
$\alpha \in \schinf(2,3)$.  In the middle is $p\inv(\alpha)$, and on the
right is $p_1\inv(\alpha)$.  Both $p\inv(\alpha)$ and $p\inv(\alpha)$ have
the homotopy type of a wedge of spheres, one for each disc in $\alpha$.}
\end{figure}


\begin{thebibliography}{Kon99}

\bibitem[BM06]{berger-moerdijk-w}
C.~Berger and I.~Moerdijk, \textsl{ The Boardman-Vogt resolution of operads in
  monoidal model categories},
\newblock Topology \textbf{ 45}(5), 807--849 (2006).

\bibitem[BV]{boardman-vogt-hiasots}
J.~Boardman and R.~Vogt, \textsl{ Homotopy Invariant Algebraic Structures on
  Topological Spaces, SpringerVerlag Lec},
\newblock Notes Math \textbf{ 347}.

\bibitem[Cos04]{costello-a-infinity}
K.~Costello, \textsl{ The A-infinity operad and the moduli space of curves},
\newblock Arxiv preprint math/0402015  (2004).

\bibitem[Get09]{getzler-operads-revisited}
E.~Getzler, \textsl{ Operads revisited},
\newblock Algebra, Arithmetic, and Geometry , 675--698 (2009).

\bibitem[Kon99]{kontsevich-operads-motives}
M.~Kontsevich, \textsl{ Operads and motives in deformation quantization},
\newblock Lett. Math. Phys. \textbf{ 48}(1), 35--72 (1999),
\newblock Mosh{\'e} Flato (1937--1998).

\bibitem[KS00]{kontsevich-soibelman-deformations}
M.~Kontsevich and Y.~Soibelman, \textsl{ Deformations of algebras over operads
  and the Deligne conjecture},
\newblock Math. Phys. Stud \textbf{ 21}, 255--307 (2000).

\bibitem[Tho12]{thomas-kontsevich-swiss-cheese}
J.~D. Thomas, \textsl{ Kontsevich's Swiss Cheese Conjecture},
\newblock (2012).

\end{thebibliography}
\end{document}